\newtheorem{theorem}{Theorem}[section]
\newtheorem{definition}{Definition}[section]
\newtheorem{lemma}{Lemma}[section]
\newtheorem{remark}{Remark}[section]
\newtheorem{example}{Example}[section]
\newtheorem{corollary}{Corollary}[section]
\let\oldbibliography\thebibliography
\renewcommand{\thebibliography}[1]{%
 \footnotesize
  \oldbibliography{#1}%
  \setlength{\itemsep}{0pt}%
}
\def \de {\partial}
\def \N {\mathbb{N}}
\def \la {\langle}
\def \ra {\rangle}
\def \R {\mathbb{R}}
\def \H {\mathbb{H}}
\def \na {\nabla}
\newcommand{\Hn}{{\mathbb{H}^n}}
\begin{document}

\title{A characterization of gauge balls in $\Hn$\\ by horizontal curvature}

\author{Chiara Guidi$^{(1)}$ \& Vittorio Martino$^{(2)}$ \& Giulio Tralli$^{(3)}$}
\addtocounter{footnote}{1}
\footnotetext{Dipartimento di Matematica, Universit\`a di Bologna, piazza di Porta S.Donato 5, 40126 Bologna, Italy. E-mail address:
{\tt{chiara.guidi12@unibo.it}}}
\addtocounter{footnote}{1}
\footnotetext{Dipartimento di Matematica, Universit\`a di Bologna, piazza di Porta S.Donato 5, 40126 Bologna, Italy. E-mail address:
{\tt{vittorio.martino3@unibo.it}}}
\addtocounter{footnote}{1}
\footnotetext{Dipartimento d'Ingegneria Civile e Ambientale (DICEA), Universit\`a di Padova, Via Marzolo 9, 35131 Padova, Italy. E-mail address: {\tt{giulio.tralli@unipd.it}}}

\date{}
\maketitle

\vspace{5mm}

{\noindent\bf Abstract} {\small In this paper we aim at identifying the level sets of the gauge norm in the Heisenberg group $\Hn$ via the prescription of their (non-constant) horizontal mean curvature. We establish a uniqueness result in $\H^1$ under an assumption on the location of the singular set, and in $\H^n$ for $n\geq 2$ in the proper class of horizontally umbilical hypersurfaces.
}

\vspace{5mm}

\noindent{\small Keywords: Heisenberg group, horizontal curvature, gauge balls.}

\vspace{4mm}

\noindent{\small 2020 Mathematics Subject Classification: Primary 53A10; Secondary 35R03, 53C21, 32V20.}

\vspace{4mm}

\section{Introduction}

\noindent
If we identify the Heisenberg group $\Hn$ with $\R^{2n+1}=\R^{n}\times\R^n\times \R$ with generic point $\xi=(x,y,t)$ and we choose the group law
\begin{equation}\label{glaw}
\xi\circ \xi'=(x,y,t)\circ(x',y',t')=\left(x+x',y+y',t+t'+2\sum_{k=1}^n (x_k y'_k  - y_k x'_k)\right),
\end{equation}
the so-called homogeneous gauge is the function defined by
\begin{equation*}
\rho(\xi) =\left((|x|^2+|y|^2)^2+t^2\right)^{\frac{1}{4}}.
\end{equation*}
Such a $\rho(\cdot)$ is in fact homogeneous of degree $1$ with respect to the family of dilations
\begin{equation}\label{defdil}
\delta_R(\xi)=(R x, R y, R^2 t), \qquad R>0,
\end{equation}
and it provides the defining function of the following \emph{gauge balls} (sometimes called Kor\'anyi balls)
\begin{equation}\label{defballs}
B_R(\xi_0)=\left\{ \xi \in \Hn \; : \; \rho(\xi_0^{-1}\circ\xi) < R \right\},\qquad\mbox{ for }\xi_0\in\Hn,\, R>0.
\end{equation}
The gauge function appeared in \cite{KV} in the study of singular integrals on homogeneous spaces. It has played over the years a crucial role in the analysis of pdes of sub-elliptic type since the discovery in \cite[Theorem 2]{Fo73} that $\rho^{-2n}(\cdot)$ is, up to a constant, the fundamental solution of the Heisenberg subLaplacian $\Delta_{\Hn}$. It is in fact known since \cite[Th\'eor\`eme 3]{Ga77} (see also the treatment in \cite[Section 5]{BLU}) the validity of an analogue of the classical Gauss-Koebe theorem saying that the pointwise value of every solution $u$ to $\Delta_{\Hn}u=0$ can be represented as a weighted average of the values of $u$ on gauge balls $B_R$. The weight is given by the squared norm of the horizontal gradient of $\rho$ (which is homogeneous of degree $0$ but not constant). Gauge balls are actually characterized by such a weighted mean value property for $\Delta_{\Hn}$-harmonic functions as proved by Lanconelli in \cite{L13}.\\
The metric balls $B_R$ defined in \eqref{defballs} are not the unique choice of ``balls'' adapting to the subRiemannian features of the Heisenberg group. For instance, the Carnot-Carath\'edory balls play somehow the role of the geodesic balls in $\Hn$. Furthermore it is very much related to our purposes the case of the domains bounded by the so-called \emph{Pansu spheres}: they are the cmc-spheres with respect to the relevant notion of horizontal mean curvature (see Definition \ref{defcurv} below) and they are the conjectured unique minimizers for the isoperimetric inequality \cite{PP}. The solution of the isoperimetric problem in the Heisenberg group, also known as Pansu's conjecture, has generated a great amount of attention and several proofs appeared in the literature under extra-assumptions on the class of competitors \cite{LM, Cap, DGN08, Mo, MoRi, Ri}. Concerning the related Alexandrov-type problem, it was shown in \cite{RiRo} that Pansu spheres are the only rotationally invariant hypersurfaces with constant horizontal mean curvature. To the best of our knowledge, a result which is reminiscent of the classical Alexandrov theorem \cite{A} is available only in $\mathbb{H}^1$: as a matter of fact, in \cite[Theorem 6.10]{RiRo1} Ritor\'e and Rosales proved that Pansu spheres in $\mathbb{H}^1$ are the only $C^2$-smooth critical points of the horizontal perimeter under volume constraint. For $n\geq 2$, various characterizations of Pansu spheres among horizontally umbilical hypersurfaces were established in \cite{CCHY18}.

\vskip 0.4cm

In this paper we take a new perspective as we address the question of characterizing the gauge balls by prescribing the horizontal mean curvature. In a similar spirit, in a companion paper \cite{MTsw} two of us have dealt with various characterizations of gauge balls through suitable overdetermined problems. To give a better description of the main results we provide the reader with some initial background on the main notions involved, and we refer to Section \ref{sec2} for the precise definitions. In $\Hn$ the horizontal distribution is spanned at any point $\xi=(x,y,t)$ by the vector fields
\begin{equation*}
X_j=\frac{\partial}{\partial x_j}-2y_j\frac{\partial}{\partial t},\quad  Y_j=\frac{\partial}{\partial y_j}+2x_j\frac{\partial}{\partial t}, \quad j=1,\dots,n,
\end{equation*}
which are left-invariant with respect to the group law \eqref{glaw} and homogeneous of degree one with respect to \eqref{defdil}. In our notations we let
$$\mathcal{H}_{\xi}=span\{X_1, \ldots, X_n, Y_1, \ldots, Y_n\}.$$
We also denote $T=\frac{\partial}{\partial t}$, and we consider in $\Hn$ the Riemannian metric $\la \cdot, \cdot \ra$ which makes the basis $\mathcal{B}=\{X_1, \ldots, X_n, Y_1, \ldots, Y_n, T\}$ orthonormal. If we consider a smooth hypersurface $M\subset \Hn$, a point $\xi\in M$ is said to be characteristic if the tangent space of $M$ at $\xi$ coincides with $\mathcal{H}_{\xi}$. At any point $\xi\in M$ which is not characteristic it is thus well-defined the so-called horizontal normal $\nu^H$ as the normalized $\la \cdot, \cdot \ra$-orthogonal projection on $\mathcal{H}_{\xi}$ of the metric (outer, whenever possible) unit normal $\nu$. The horizontal mean curvature is the divergence of such $\nu^H$ (see Section \ref{sec2} for the precise definitions), which is therefore well-defined at any non-characteristic point. A simple computation shows that the horizontal mean curvature of $\de B_R(0)\subset \Hn$ is proportional to the distance to the $t$-axis, i.e. it is a constant multiple of $\sqrt{|x|^2+|y|^2}$ at any point $(x,y,t)\in \de B_R(0)$ (outside of the two poles sitting on the $t$-axis, which correspond to the only characteristic points for the gauge sphere). In $\H^1$ our main result reads as follows

\begin{theorem}\label{main}
Let $M$ be a smooth surface in $\H^1$ which is connected, orientable, compact, and without boundary. Assume that there are no characteristic points of $M$ outside of the line $\{(0,0,t)\in\mathbb{H}^1\,:\, t\in\R\}$. If at every non-characteristic point $(x,y,t)\in M$ the horizontal mean curvature of $M$ is proportional to $\sqrt{x^2+y^2}$ up to a constant factor $c\neq 0$, then $c>0$ and there exists $t_0\in\R$ such that $M=\de B_R(\xi_0)$ with $R=\sqrt{\frac{3}{c}}$ and $\xi_0=(0,0,t_0)$.
\end{theorem}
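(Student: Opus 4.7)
The starting point is a pair of computational identities for the gauge function: $|\nabla_H \rho|^2 = (x^2+y^2)/\rho^2$ and, by the harmonicity of $\rho^{-2}$ in $\H^1$, $\Delta_H \rho = 3|\nabla_H\rho|^2/\rho$. From these a short calculation shows that the horizontal mean curvature of $\de B_R(\xi_0)$ at every non-characteristic point is exactly $(3/R^2)\sqrt{x^2+y^2}$ whenever $\xi_0$ lies on the $t$-axis. This already identifies the expected value $R=\sqrt{3/c}$. I will also use the identity $\Delta_H \rho_{\xi_0}^4 = 24(x^2+y^2)$, valid for every $\xi_0=(0,0,t_0)$ by the $t$-translation invariance of $\Delta_H$ and a direct check in $\H^1$.

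Next, observe that the hypothesis on the singular set forces any candidate center to sit on the $t$-axis. Indeed, a direct computation shows that $\de B_R(\xi_0)$ has exactly the two characteristic points $\xi_0\circ(0,0,\pm R^2)=(x_0,y_0,t_0\pm R^2)$, which lie on the global line $\{(0,0,t)\}$ if and only if $(x_0,y_0)=(0,0)$.

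The sign of $c$ is obtained by a tangential-comparison argument. Pick an interior point $\xi_0\in\Omega$ and let $R^\ast=\max_M \rho_{\xi_0}$, realized at some $p\in M$. Then $\Omega\subset\overline{B_{R^\ast}(\xi_0)}$ with internal tangency at $p$ and matching Riemannian outer normals (hence the same $\nu^H$). The standard comparison of second fundamental forms gives $H_M(p)\geq H_{\de B_{R^\ast}(\xi_0)}(p)$, namely
\[
c\sqrt{x_p^2+y_p^2}\ \geq\ \frac{3}{(R^\ast)^2}\sqrt{(x_p-x_0)^2+(y_p-y_0)^2}.
\]
For a generic choice of interior $\xi_0$ off the vertical line through $p$, the right-hand side is strictly positive, whence $c>0$.

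The core of the proof is the rigidity step: show that $M=\de B_R(\xi_0)$ with $\xi_0=(0,0,t_0)$ and $R=\sqrt{3/c}$. My plan is to exploit $\Delta_H \rho_{\xi_0}^4 = 24(x^2+y^2)$ through the horizontal divergence theorem in $\Omega$,
\[
24\int_\Omega (x^2+y^2)\,d\xi \ =\ \int_M \langle\nabla_H \rho_{\xi_0}^4,\nu^H\rangle\,d\sigma_H \ =\ 4\int_M \rho_{\xi_0}^3\langle\nabla_H \rho_{\xi_0},\nu^H\rangle\,d\sigma_H,
\]
and to pair this with a Minkowski-type identity that brings in $H_M=c\sqrt{x^2+y^2}$ through an integration by parts against a suitable horizontal vector field. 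Choosing the free parameter $t_0$ to cancel the linear $t$-contribution in the boundary integrals (a center-of-horizontal-mass condition), a Cauchy-Schwarz or completing-the-square step should isolate a non-negative integrand of the form $(\rho_{\xi_0}^4-R^4)^2$ on $M$ whose integral vanishes, forcing $\rho_{\xi_0}\equiv R$ on $M$ and hence $M=\de B_R(\xi_0)$. The main obstacle is engineering this integral identity so that the equality case is sharp in the subelliptic setting (no Alexandrov-type symmetrization is available), and carefully controlling the boundary contribution near the two allowed characteristic points on the $t$-axis, where the horizontal perimeter measure $d\sigma_H$ degenerates.
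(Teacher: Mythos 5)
The heart of Theorem \ref{main} is the rigidity step, and that is exactly the part your proposal leaves as a plan rather than a proof. Your preliminary computations are correct: $|\nabla_H\rho|^2=(x^2+y^2)/\rho^2$, $\Delta_H\rho^4=24(x^2+y^2)$, and $H_{\partial B_R}=\tfrac{3}{R^2}\sqrt{x^2+y^2}$ agree with \eqref{curvsphere} for $n=1$. But the proposed pairing of the horizontal divergence theorem with a ``Minkowski-type identity'' and a completing-the-square yielding $(\rho_{\xi_0}^4-R^4)^2$ is never written down, and this is precisely where the subelliptic setting bites: the known first-variation and Minkowski-type formulas in $\H^1$ (Danielli--Garofalo--Nhieu, Ritor\'e--Rosales) carry torsion terms involving $\langle\nu,T\rangle$ and degenerate at the characteristic set, and no such sharp integral identity is available --- which is why, as the Introduction notes, Alexandrov-type results in this setting are only known under strong restrictions. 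You acknowledge this obstacle yourself, so as it stands the core of the theorem is unproved. A secondary issue: your tangency argument for $c>0$ needs the touching point $p$ to be non-characteristic for both $M$ and the enclosing gauge sphere and off the relevant vertical lines (your ``generic choice of $\xi_0$ off the vertical line through $p$'' is circular, since $p$ depends on $\xi_0$), and it invokes a comparison principle for the horizontal mean curvature operator that should be justified; the paper's Step I in Lemma \ref{lem2}, which maximizes $|\xi^H|^2$ and uses \eqref{deretahxi}, is more elementary.

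The paper's actual route is pointwise and dynamical rather than integral. Since $\mathcal{H}\cap TM$ is one-dimensional in $\H^1$, $M$ is ruled by the integral curves of $\eta=-J\nu^H$, and the authors introduce the two functions $\varphi_h,\varphi_v$ of \eqref{defphis}, modelled on the planar argument \eqref{twof}. Lemma \ref{lem1} gives $\eta(\varphi_h)=0$; Lemma \ref{lem2} shows by an ODE analysis of $t(s),r(s),\theta(s)$ along an integral curve that $\varphi_h\not\equiv 0$ would force $t(s)$ to be unbounded, contradicting compactness, whence $\varphi_h\equiv 0$, $c>0$, and every integral curve reaches $S_M\subset L_v$; then $\varphi_v$ is constant and the two identities $\varphi_h\equiv 0$, $\varphi_v\equiv\tfrac{c}{3}t_1+1$ combine into $\left(\tfrac{c}{3}|\xi^H|^2\right)^2+\left(\tfrac{c}{3}(t-t_0)\right)^2=1$, i.e.\ $\rho_{\xi_0}\equiv R$. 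The decisive ingredient missing from your plan is any concrete mechanism converting the prescribed non-constant curvature into these two pointwise conservation laws; without it (or a genuinely established integral identity with a sharp equality case), the proposal does not prove the theorem.
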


The restriction to the ($n=1$)-dimensional case in the previous theorem relies on the fact that the $2$-dimensional surface $M\subset\mathbb{H}^1$ has only one horizontal tangent vector field at every non-characteristic point, and $M$ is then `ruled' by its integral curves (as it is clear from the analysis developed in \cite{CHMY, RiRo1}). In higher dimensions we have the following counterpart, which is a characterization of gauge spheres under the proper prescribed curvature assumption among the class of umbilic hypersurfaces introduced in \cite{CCHY18} (see Definition \ref{def: umbilical} below).

\begin{theorem}\label{mainumb}
Fix $n\geq 2$. Let $M$ be a smooth hypersurface of $\Hn$ which is connected, orientable, compact, and without boundary. Suppose that $M$ is umbilic and that, at every non-characteristic point $(x,y,t)\in M$, the horizontal mean curvature of $M$ is proportional to $\sqrt{|x|^2+|y|^2}$ up to a constant factor $c\neq 0$. Then $c>0$ and there exists $t_0\in\R$ such that $M=\de B_R(\xi_0)$ with $R=\sqrt{\frac{1}{c}\frac{2n+1}{2n-1}}$ and $\xi_0=(0,0,t_0)$.
\end{theorem}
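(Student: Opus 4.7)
The plan is to exploit the strong rigidity of horizontally umbilic hypersurfaces in $\Hn$ when $n\geq 2$, adapting the moving-frame machinery of Cheng--Chiu--Hwang--Yang \cite{CCHY18}. At every non-characteristic point $\xi\in M$, I would pick an orthonormal frame $\{e_1,\ldots,e_{2n-2},e_{2n-1},e_{2n},T\}$ of $\Hn$ with $e_{2n}=\nu^H$ the horizontal unit normal, $e_{2n-1}=J\nu^H$ (where $J$ acts as $JX_j=Y_j$, $JY_j=-X_j$) tangent to $M$, and $e_1,\ldots,e_{2n-2}$ spanning the Legendrian distribution $\mathcal L:=\mathcal{H}_{\xi}\cap T_\xi M \cap \{e_{2n-1}\}^\perp$. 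Umbilicity asserts that the Levi-type form $\langle\na_{e_i}\nu^H,e_j\rangle$ equals $\lambda\,\delta_{ij}$ on $\mathcal L$, with $\lambda$ tied to $H$; under our hypothesis, $\lambda$ must be proportional to $r:=\sqrt{|x|^2+|y|^2}$.

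I would start by verifying directly that $\de B_R(0)$ is horizontally umbilic. Taking $\rho^4=r^4+t^2$ as defining function, on the sphere one has $\nu^H = \rho\,\na_H\rho/r$, and a routine computation of the horizontal Hessian of $\rho$ along Legendrian tangent directions shows that the principal curvatures in those directions all coincide and equal a multiple of $r/R^2$. Taking the trace (and using the normalization implicit in Definition \ref{defcurv}) then yields $H=c\,r$ with $c=\frac{2n+1}{(2n-1)R^2}$, which both matches the relation between $c$ and $R$ claimed in the statement and fixes the sign $c>0$.

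The heart of the argument is uniqueness. Combining the umbilic identity with the prescription $H=c\,r$ and differentiating the structure equations along the adapted frame, one obtains a closed system of PDEs for the embedding of $M$. The strategy is to integrate this system and exhibit a point $\xi_0=(0,0,t_0)\in\Hn$ on the $t$-axis such that $\rho(\xi_0^{-1}\circ\xi)$ is constant on $M$; equivalently, one would produce a scalar $F=\rho(\xi_0^{-1}\circ\cdot)$ whose horizontal gradient is parallel to $\nu^H$ along $M$. The umbilic identity together with the specific form of $H$ should force the compatibility conditions for such an $F$, while the translation parameter $t_0$ is recovered by integrating along the characteristic flow of $e_{2n-1}$. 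The main obstacle is the bookkeeping needed to close the system and identify the correct center $\xi_0$; the hypothesis $n\geq 2$ is essential here, since only then does the Legendrian distribution $\mathcal L$ have positive dimension $2n-2$ and supply enough equations.

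Once $M$ is locally shown to coincide with a gauge sphere centered on the $t$-axis, the global conclusion follows from the topological hypotheses: $M$ is connected, compact, and boundaryless, and the gauge sphere $\de B_R(\xi_0)$ is itself a connected, compact, boundaryless smooth hypersurface; a standard open-and-closed argument then forces $M=\de B_R(\xi_0)$. The only characteristic points of $\de B_R(\xi_0)$ are the two poles $\xi_0\circ(0,0,\pm R^2)$, which lie on the $t$-axis and are consistent with the formula $H=c\,r$ (vanishing precisely where $r=0$).
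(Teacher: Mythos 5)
Your proposal is a plan rather than a proof, and as written it has two genuine gaps. First, the claim that the sign $c>0$ is ``fixed'' by computing the curvature of $\de B_R(0)$ is circular: that computation only shows that gauge spheres realize $c=\frac{2n+1}{(2n-1)R^2}>0$, not that a compact umbilic $M$ with $H_M=c\,r$ and $c<0$ cannot exist. The paper excludes $c<0$ by evaluating $\eta^2\bigl(\tfrac12|\xi^H|^2\bigr)\leq 0$ at a maximum point of $|\xi^H|^2$ and using that $\nu$ is the outward normal there.

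Second, and more seriously, the central step --- ``the umbilic identity together with the specific form of $H$ should force the compatibility conditions'' for a gauge-distance function $F$ --- fails at the local level. Umbilicity prescribes $A_M(Z)=(l-k)\langle\eta,Z\rangle\eta+kZ$ with \emph{no} a priori relation between $l$ and $k$; the gauge spheres satisfy $l=3k$, but locally there is a one-parameter family of umbilic hypersurfaces with $H_M=c\,r$ that are not pieces of gauge spheres. Concretely, the quantity $\phi=|\xi^H|^{2n-2}\bigl(\tfrac{2n-1}{2n+1}H_M|\xi^H|^2-\langle\nu^H,\xi^H\rangle\bigr)$ is constant on any connected such $M$ (this uses the Codazzi-type identities of \cite{CCHY18}, which your frame computation would in any case have to reproduce in order to ``close the system''), and $M$ is a piece of a gauge sphere exactly when that constant vanishes. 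Ruling out $\phi\equiv\phi_0\neq 0$ cannot be done by local integration or by an open-and-closed argument: the paper does it by showing that for $\phi_0\neq 0$ the integral curves of $\eta$ extend indefinitely and their $t$-coordinate is unbounded (via a monotonicity and periodicity analysis of $t'(s)$ and of the angle between $\nu^H$ and $\xi^H$), contradicting compactness. So compactness enters analytically, not merely topologically, and your outline supplies no mechanism to replace this. Relatedly, you never establish that $S_M=M\cap L_v\neq\emptyset$, which is precisely what allows one to pin down the center $t_0$ by letting the $\eta$-curves run into a characteristic point; without it the ``translation parameter recovered by integrating along the characteristic flow'' has no anchor.
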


The paper is organized as follows. In Section \ref{sec2} we recall the main definitions involved and we show some basic properties. In Section \ref{sec3} we give the proofs of Theorem \ref{main} and Theorem \ref{mainumb} which follow a similar pattern: the main aim is to infer, under the respective assumptions, that the key functions $\varphi_h, \varphi_v$ introduced below in \eqref{defphis} and \eqref{defphisn} are constant throughout $M$. Finally, we will show in Corollary \ref{corcyl} that Theorems \ref{main} and \ref{mainumb} imply in particular a rigidity result in the class of cylindrically symmetric hypersurfaces $M\subset \Hn$ for any $n\geq 1$.

\section{Definitions and preliminaries}\label{sec2}

In this section we collect some preliminary material that will be used in the rest of the paper. We shall recall some known notions for the study of smooth hypersurfaces in $\Hn$, and we refer the reader to \cite{DGN03, P, Ni, CHMY, DGN07, RiRo, HP, Ri, CL, BTV, CCHY18, BBCH} for several insights and different perspectives and approaches to the geometry of submanifolds in various subRiemannian settings.\\
Being $\la \cdot, \cdot \ra$ the metric defined in the Introduction (with induced norm $|\cdot|$), we denote by $\na$ the Levi-Civita connection associated to this metric. A direct computation shows that for any $i,j=1,\dots,n$ the following holds
\begin{equation}\label{eq: cov der}
\begin{split}
& \na_{X_i} X_j=0, \quad \na_{X_i} Y_j=2\delta_{ij}T, \quad  \na_{X_i} T=-2 Y_i, \\
& \na_{Y_i} X_j=-2\delta_{ij}T, \quad \na_{Y_i} Y_j=0, \quad  \na_{Y_i} T=2 X_i, \\
& \na_{T} X_i=-2 Y_i, \quad \na_{T} Y_i=2 X_i, \quad  \na_{T} T=0.
\end{split}
\end{equation}
For any smooth vector field $V$ in the horizontal distribution $\mathcal{H}$ we define 
$$
J(V):=-\frac{1}{2}\na_V T.
$$
In this way we have $J(X_i)=Y_i$ and $J(Y_i)=-X_i$ for all $i\in\{1,\ldots,n\}$. Moreover, for any $V,W \in \mathcal{H}$, one can easily see that the following relations hold
\begin{equation}\label{eq: relations J}
\begin{split}
&\la J(V),W\ra=-\la V,J(W)\ra,\quad \la J(V),J(W)\ra=\la V,W\ra,\\
&J(\na_VW)=\na_V(JW), \,\,\, \text{and} \,\,\, \la [V,W], T\ra = 4\la J(V),W\ra.
\end{split}
\end{equation}
For any smooth vector field $V$ in $\Hn$, we will use the notation $\mathcal{P}_H(V)$ to denote its horizontal projection, being $\mathcal{P}_H$ the orthogonal projection onto $\mathcal{H}$. A special role will be played by the horizontal part of the position vector, i.e.
$$\xi^H:=\mathcal{P}_H(\xi)=\sum_{j=1}^n x_jX_j + y_jY_j.$$
We can show that
\begin{equation}\label{eq cov der zeta xih}
\na_{Z}\xi^H=Z+2\la J(Z),\xi^H\ra  T\qquad\mbox{ for any }Z\in \mathcal{H}.
\end{equation}
To see \eqref{eq cov der zeta xih}, we just write $Z=\sum_{j=1}^n\left(\alpha_jX_j+\beta_jY_j\right)$ and it is straightforward to recognize from \eqref{eq: cov der} that
\begin{align*}
\na_{Z}\xi^H&=\sum_{j=1}^n\left(Z(x_j)X_j+Z(y_j)Y_j\right)+\\
&+\sum_{j,k=1}^n \left(\alpha_kx_j\na_{X_k}X_j+\beta_kx_j\na_{Y_k}X_j +\alpha_ky_j\na_{X_k}Y_j+\beta_ky_j\na_{Y_k}Y_j\right)\\
&=Z+2\sum_{j=1}^n(\alpha_jy_j-\beta_jx_j)T=Z+2\la J(Z), \xi^H\ra  T.
\end{align*}
Similarly we have
\begin{equation}\label{tderivative}
Z(t)=-2\left\langle J(Z),\xi^H \right\rangle \qquad\mbox{ for any }Z\in \mathcal{H},
\end{equation}
since in the same notations we can check that
$$
Z(t)=\sum_{j=1}^n \left(-2\alpha_j y_j + 2\beta_j x_j\right)=-2\left\langle J(Z),\xi^H \right\rangle.
$$
\vskip 0.4cm

We now start considering a $C^2$-smooth codimension $1$ submanifold $M$ in $\Hn$. We always assume $M$ to be connected and orientable. We denote by $\nu$ a fixed choice for the metric normal with unit length, and by $T_\xi M$ the tangent space at $\xi\in M$. Whenever $M$ is also compact and without boundary, we agree to fix $\nu$ as the outward unit normal. The characteristic set is defined as
$$S_M:=\{\xi\in M\,:\, \mathcal{P}_H(\nu)=0 \}=\{\xi\in M\,:\, T_\xi M=\mathcal{H}_\xi \}.$$
Outside of the set $S_M$ we suppose the hypersurface to be $C^\infty$-smooth. For any point in $M\smallsetminus S_M$ it is well-defined
$$
\nu^H=\frac{1}{|\mathcal{P}_H(\nu)|} \mathcal{P}_H(\nu),
$$
and we can write
$$
\nu= |\mathcal{P}_H(\nu)| \nu^H + \left\langle \nu, T\right\rangle T
$$
and define the tangent vector field
$$
\tau:= \left\langle \nu, T\right\rangle \nu^H - |\mathcal{P}_H(\nu)| T.
$$
We further denote
\begin{equation}\label{defeta}
\eta=-J\nu^H
\end{equation}
which clearly belongs to $\mathcal{H}\cap TM$. In case $n>1$, locally around any point $\xi\in M\smallsetminus S_M$ we can also pick smooth horizontal vector fields $V_i, W_i$ for $i=1,\ldots, n-1$ such that $J(V_i)=W_i$ and
$$
\left\{\eta, \nu^H, V_1, W_1,\ldots, V_{n-1}, W_{n-1}\right\}
$$
is an orthonormal basis for $\mathcal{H}_\xi$. With these choices we have fixed the orthonormal frames for $TM$ and $\mathcal{H}\cap TM$ (outside of characteristic points) as, respectively,
$$
\left\{\tau, \eta, V_1, W_1,\ldots, V_{n-1}, W_{n-1}\right\}\quad\mbox{ and }\quad\left\{\eta, V_1, W_1,\ldots, V_{n-1}, W_{n-1}\right\}.
$$
In our notations we have the following
\begin{lemma}
In $M\smallsetminus S_M$ it holds
\begin{equation}\label{deZnuhT}
\left\langle \na_Z \nu^H, T\right\rangle=2\left\langle \eta, Z\right\rangle \quad\mbox{ for every }Z \in \mathcal{H},
\end{equation}
\begin{equation}\label{partdivfree}
\left\langle \na_\tau \nu^H,\tau \right\rangle=0=\left\langle \na_\nu \nu^H,\nu \right\rangle,
\end{equation}
and
\begin{equation}\label{aggcommu}
\left\langle [Z_1,Z_2],\nu^H\right\rangle=\frac{-4\left\langle \nu , T\right\rangle}{|\mathcal{P}_H(\nu)|}\left\langle J(Z_1),Z_2\right\rangle\quad\mbox{ for every }Z_1,Z_2 \in \mathcal{H}\cap TM.
\end{equation}
\end{lemma}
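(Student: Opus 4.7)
The plan is to prove the three identities \eqref{deZnuhT}, \eqref{partdivfree}, \eqref{aggcommu} one at a time, essentially by differentiating appropriate orthogonality relations and exploiting the algebraic identities collected in \eqref{eq: cov der} and \eqref{eq: relations J}.

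For \eqref{deZnuhT}, I would start from the horizontality $\langle \nu^H, T\rangle=0$. Applying $Z\in\mathcal{H}$ and using the compatibility of $\nabla$ with the metric,
\[
0 = Z\langle \nu^H, T\rangle = \langle \nabla_Z\nu^H, T\rangle + \langle \nu^H, \nabla_Z T\rangle.
\]
Since $\nabla_Z T = -2J(Z)$ by the definition of $J$, the second term equals $-2\langle \nu^H, J(Z)\rangle$. The skew-symmetry $\langle \nu^H, J(Z)\rangle = -\langle J(\nu^H), Z\rangle$ from \eqref{eq: relations J}, combined with $\eta=-J\nu^H$, yields exactly $\langle\nabla_Z\nu^H,T\rangle=2\langle\eta,Z\rangle$.

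For \eqref{partdivfree}, I would decompose $\nu = |\mathcal{P}_H(\nu)|\nu^H + \langle\nu,T\rangle T$ and $\tau = \langle\nu,T\rangle \nu^H - |\mathcal{P}_H(\nu)| T$. Since $|\nu^H|=1$, automatically $\langle\nabla_X\nu^H,\nu^H\rangle=0$ for any $X$, so both inner products reduce to a $T$-component. For either choice $X\in\{\tau,\nu\}$, differentiating $\langle\nu^H,T\rangle=0$ gives $\langle\nabla_X\nu^H,T\rangle = -\langle\nu^H,\nabla_X T\rangle$, and expanding $\nabla_X T$ via \eqref{eq: cov der} produces a vector proportional to $J(\nu^H)=-\eta$. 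Since $\langle\nu^H,\eta\rangle=0$, this $T$-component vanishes, and \eqref{partdivfree} follows.

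For \eqref{aggcommu}, the key observation is that $Z_1,Z_2\in\mathcal{H}\cap TM$ are tangent to $M$, so their Lie bracket $[Z_1,Z_2]$ is also tangent, hence $\langle[Z_1,Z_2],\nu\rangle=0$. Writing
\[
\nu^H = \frac{1}{|\mathcal{P}_H(\nu)|}\bigl(\nu - \langle\nu,T\rangle T\bigr),
\]
I get $\langle[Z_1,Z_2],\nu^H\rangle = -\frac{\langle\nu,T\rangle}{|\mathcal{P}_H(\nu)|}\langle[Z_1,Z_2],T\rangle$, and the third identity in \eqref{eq: relations J} gives $\langle[Z_1,Z_2],T\rangle=4\langle J(Z_1),Z_2\rangle$, which completes the proof.

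No step presents a serious obstacle; the whole lemma is a careful bookkeeping exercise with sign conventions. The point to be most careful with is the sign in $\eta=-J\nu^H$ together with $\nabla_V T = -2J(V)$, which is where a factor of $2$ (rather than $-2$) appears in \eqref{deZnuhT}, and the factor $-4$ in \eqref{aggcommu} which combines the minus sign coming from the decomposition of $\nu^H$ with the factor $4$ from \eqref{eq: relations J}.
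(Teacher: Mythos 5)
Your proof is correct and follows essentially the same route as the paper's: differentiating $\langle\nu^H,T\rangle=0$ with $\nabla_Z T=-2J(Z)$ and the skew-symmetry of $J$ for \eqref{deZnuhT}, decomposing $\nu$ and $\tau$ and using $|\nu^H|=1$ together with $\langle\nu^H,J\nu^H\rangle=0$ for \eqref{partdivfree}, and combining $[Z_1,Z_2]\in TM$ with the bracket identity $\langle[Z_1,Z_2],T\rangle=4\langle J(Z_1),Z_2\rangle$ for \eqref{aggcommu}. The sign bookkeeping you flag is exactly where the computation lives, and you handle it correctly.
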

\begin{proof}
The relation \eqref{deZnuhT} follows by \eqref{eq: relations J} and \eqref{defeta} since, for all $Z \in \mathcal{H}$, we have
$$
\left\langle \na_Z \nu^H, T\right\rangle=-\left\langle  \nu^H, \na_Z T\right\rangle = 2 \left\langle  \nu^H, J(Z)\right\rangle = 2 \left\langle  -J \nu^H, Z\right\rangle=2\left\langle \eta, Z\right\rangle.
$$
On the other hand, using that $|\nu^H|=1$ together with \eqref{eq: cov der}-\eqref{eq: relations J}, we obtain
\begin{align*}
&\left\langle \na_\nu \nu^H,\nu \right\rangle=\left\langle \na_\nu \nu^H,|\mathcal{P}_H(\nu)| \nu^H + \left\langle \nu, T\right\rangle T \right\rangle\\
&=\left\langle \nu, T\right\rangle \left\langle \na_\nu \nu^H,  T \right\rangle=- \left\langle \nu, T\right\rangle \left\langle  \nu^H,  \na_\nu T \right\rangle=- \left\langle \nu, T\right\rangle \left\langle  \nu^H,  \na_{|\mathcal{P}_H(\nu)| \nu^H} T \right\rangle \\
&=2|\mathcal{P}_H(\nu)| \left\langle \nu, T\right\rangle \left\langle  \nu^H,  J\nu^H \right\rangle=0
\end{align*}
and analogously
\begin{align*}
&\left\langle \na_\tau \nu^H,\tau \right\rangle =\left\langle \na_\tau \nu^H,\left\langle \nu, T\right\rangle \nu^H - |\mathcal{P}_H(\nu)| T\right\rangle\\
&=- |\mathcal{P}_H(\nu)| \left\langle \na_\tau \nu^H,  T \right\rangle=|\mathcal{P}_H(\nu)|  \left\langle  \nu^H,  \na_\tau T \right\rangle=|\mathcal{P}_H(\nu)| \left\langle  \nu^H,  \na_{\left\langle \nu, T\right\rangle \nu^H} T \right\rangle \\
&=-2|\mathcal{P}_H(\nu)| \left\langle \nu, T\right\rangle \left\langle  \nu^H,  J\nu^H \right\rangle=0.
\end{align*}
The previous two identities show \eqref{partdivfree}. Finally, in order to prove \eqref{aggcommu}, we can pick any $Z_1,Z_2 \in \mathcal{H}\cap TM$ and deduce from the property $[Z_1,Z_2]\in TM$ and \eqref{eq: relations J} that
\begin{align*}
&\left\langle [Z_1,Z_2],\nu^H\right\rangle = \frac{1}{|\mathcal{P}_H(\nu)|} \left\langle [Z_1,Z_2],|\mathcal{P}_H(\nu)|\nu^H\right\rangle \\
&= \frac{1}{|\mathcal{P}_H(\nu)|} \left\langle [Z_1,Z_2],\nu\right\rangle - \frac{ \left\langle \nu, T\right\rangle }{|\mathcal{P}_H(\nu)|} \left\langle [Z_1,Z_2],T\right\rangle = - \frac{ \left\langle \nu, T\right\rangle }{|\mathcal{P}_H(\nu)|} \left\langle [Z_1,Z_2],T\right\rangle\\
&= \frac{ -4\left\langle \nu, T\right\rangle }{|\mathcal{P}_H(\nu)|} \left\langle J(Z_1),Z_2\right\rangle.
\end{align*}
\end{proof}

We are then ready to recall the definition of horizontal mean curvature. Such notion arises in the criticality condition for the horizontal perimeter (see \cite{CDG, DGN07}).

\begin{definition}[horizontal mean curvature]\label{defcurv}
Let $M\subset \Hn$ as above. For any $\xi\in M\smallsetminus S_M$ we define the horizontal mean curvature of $M$ at $\xi$ as
\begin{equation}\label{defH}
H_M(\xi)=\frac{{\rm{div}}(\nu^H)}{2n-1}=\frac{1}{2n-1}\left( \left\langle \na_\eta \nu^H,\eta \right\rangle +\sum_{i=1}^{n-1} \left\langle \na_{V_i} \nu^H,V_i \right\rangle + \left\langle \na_{W_i} \nu^H,W_i \right\rangle \right),
\end{equation}
where $\rm{div}$ stands for the divergence with respect to the metric $\left\langle \cdot,\cdot \right\rangle$.\\
In particular, if $M\subset \mathbb{H}^1$ we simply have
\begin{equation}\label{defHn1}
H_M(\xi)=\left\langle \na_\eta \nu^H,\eta \right\rangle \quad\, \mbox{ in case }n=1.
\end{equation}
\end{definition}

We warn the reader that the second equality in \eqref{defH} is justified by \eqref{partdivfree}. The definition of $H_M$ can be (and, in the literature, has been) in fact given in multiple ways. For example, since $\na_T T=0$, it is immediate to recognize that
$$
{\rm{div}}(\nu^H)= \sum_{i=1}^{n} \left\langle \na_{X_i} \nu^H,X_i \right\rangle + \left\langle \na_{Y_i} \nu^H,Y_i \right\rangle.
$$
By noticing that by \eqref{deZnuhT} we have
$$
\mathcal{P}_H(\na_Z\nu^H)= \na_Z\nu^H - 2\left\langle \eta, Z\right\rangle T \quad\mbox{ for any }Z\in \mathcal{H},
$$
we can also recall the notion of horizontal shape operator (see \cite{Ri}) which we will be needed in what follows.

\begin{definition}[horizontal shape operator]\label{defshape}
Let $M\subset \Hn$ as above. For any $\xi\in M\smallsetminus S_M$ we can define the symmetric endomorphism $A_M(\cdot)(\xi)$ on $\mathcal{H}_\xi\cap T_\xi M$ as
$$
A_M(Z)= \mathcal{P}_H(\na_Z\nu^H) - \frac{2\left\langle \nu, T\right\rangle}{|\mathcal{P}_H(\nu)|}\left( J(Z)-\left\langle \eta, Z\right\rangle \nu^H \right)$$
for $Z\in \mathcal{H}\cap T M$.
\end{definition}
The fact that $A_M(Z) \in \mathcal{H}\cap T M$ for $Z\in \mathcal{H}\cap T M$ follows by the two identities
$$
\left\langle A_M(Z), T\right\rangle = 0 = \left\langle A_M(Z), \nu^H\right\rangle
$$
which can be easily checked. On the other hand, the symmetry of $A_M(\cdot)$ can be deduced from \eqref{aggcommu} since
\begin{align*}
&\left\langle A_M(Z_1), Z_2\right\rangle - \left\langle A_M(Z_2), Z_1\right\rangle \\
&=\left\langle \na_{Z_1}\nu^H, Z_2\right\rangle - \left\langle \na_{Z_2}\nu^H, Z_1\right\rangle - \frac{2\left\langle \nu, T\right\rangle}{|\mathcal{P}_H(\nu)|}\left(\left\langle J(Z_1),Z_2 \right\rangle - \left\langle J(Z_2),Z_1 \right\rangle\right)\\
&= - \left\langle \nu^H, \na_{Z_1} Z_2 - \na_{Z_2} Z_1\right\rangle - \frac{4\left\langle \nu, T\right\rangle}{|\mathcal{P}_H(\nu)|}\left\langle J(Z_1),Z_2 \right\rangle =0
\end{align*}
for any $Z_1, Z_2\in \mathcal{H}\cap T M$. When $n=1$, $\mathcal{H}\cap T M$ is $1$-dimensional (and generated by $\eta$ in our notations) and $A_M(\cdot)$ is nothing but the multiplication by the factor $H_M$. In higher dimensions the horizontal mean curvature appears as the normalized trace of $A_M$ since
\begin{align}\label{umbcurv}
&\left\langle A_M(\eta), \eta \right\rangle +\sum_{i=1}^{n-1} \left\langle A_M(V_i),V_i \right\rangle + \left\langle A_M(W_i),W_i \right\rangle \notag\\
&=\left\langle \na_\eta \nu^H,\eta \right\rangle +\sum_{i=1}^{n-1} \left\langle \na_{V_i} \nu^H,V_i \right\rangle + \left\langle \na_{W_i} \nu^H,W_i \right\rangle = (2n-1) H_M.
\end{align}
The following notion of horizontally umbilical hypersurface was introduced and studied in \cite{CCHY18, CCHY16}.

\begin{definition}\label{def: umbilical} Let $n\geq 2$. We say that $M$ is umbilic if, in $M\smallsetminus S_{M}$ it holds
\begin{align*}
A_M(Z)=(l-k)\la \eta,Z\ra\eta+kZ\qquad \forall\, Z\in \mathcal{H}\cap T M,
\end{align*}
for some suitable functions $k,l$.\\
In particular, at any non-characteristic point $\xi$, one has by \eqref{umbcurv}
$$
H_M(\xi)=\frac{1}{2n-1} ( l(\xi) + (2n-2) k(\xi) ).
$$
\end{definition}

The class of umbilic hypersurfaces is wide enough to contain any $M$ which is rotationally symmetric with respect to the vertical axis $\{(0,0,t)\in\Hn\,:\, t\in\R\}$ (see in this respect \cite[Proposition 3.1]{CCHY18}; see also the proof of Corollary \ref{corcyl} below).

\begin{remark}\label{rmk: umbilic cr} It is evident from Definition \ref{def: umbilical} that
\begin{equation}\label{l3k}
\mbox{if $M$ is umbilic with $l=3k$ then }\,k(\xi)= \frac{2n-1}{2n+1} H_M(\xi)\mbox{ for all }\xi\in M\smallsetminus S_M.
\end{equation}
The case $l=3k$ is related to the gauge spheres (see Example \ref{exsphere} below). We also mention the case $l=2k$ which is the one studied in \cite{CCHY18} and it is strictly related to the Pansu spheres.
\end{remark}

Let us compute explicitly the objects previously discussed in the particular case of the gauge spheres.

\begin{example}\label{exsphere}
Let $n\geq 1$, $\xi_0=(0,0,t_0)\in \Hn$ and $M=\partial B_R(\xi_0)=\{\xi=(x,y,t)\,:\, (|x|^2+|y|^2)^2+(t-t_0)^2=R^4\} \subset \Hn$. We use the notation $r=r(x,y)=\sqrt{|x|^2+|y|^2}=|\xi^H|$ to denote the distance from the $t$-axis. For any $\xi\in M$ one has
\begin{equation}\label{componu}
|\mathcal{P}_H(\nu)|=\frac{2rR^2}{\sqrt{4r^2R^4+(t-t_0)^2}}\quad\mbox{ and }\quad \left\langle \nu,T \right\rangle=\frac{t-t_0}{\sqrt{4r^2R^4+(t-t_0)^2}},
\end{equation}
which is saying in particular that the characteristic points are the ones places on the $t$-axis, i.e. $S_M=\{(0,0,t_0\pm R^2)\}$. Outside of these two points we have
\begin{align}\label{eq: coeff of nuH}
\nu^H&=\sum_{j=1}^n \frac{r^2x_j-y_j(t-t_0)}{rR^2} X_j+\frac{r^2y_j+x_j(t-t_0)}{rR^2} Y_j=\frac{r^2\xi^H+(t-t_0)J\xi^H}{rR^2}\\
\mbox{and }\quad \eta&=-J\nu^H=\frac{(t-t_0)\xi^H-r^2J\xi^H}{rR^2}.\notag
\end{align}
A straightforward computation then shows for any $j,k\in\{1,\ldots,n\}$
\begin{align*}
&\la\na_{X_k}\nu^H,X_j\ra=X_k(\left\langle \nu^H,X_j\right\rangle)
=\frac{1}{r^2R^2}\left((2x_jx_k+\delta_{jk}r^2+2y_jy_k)r-\left\langle \nu^H,X_j\right\rangle R^2x_k\right),\\
&\la\na_{Y_k}\nu^H,X_j\ra=Y_k(\left\langle \nu^H,X_j\right\rangle)
=\frac{1}{r^2R^2}\left((2x_jy_k-\delta_{jk}(t-t_0)-2y_jx_k)r-\left\langle \nu^H,X_j\right\rangle R^2y_k\right),\\
&\la\na_{X_k}\nu^H,Y_j\ra=X_k(\left\langle \nu^H,Y_j\right\rangle)
=\frac{1}{r^2R^2}\left((2y_jx_k+\delta_{jk}(t-t_0)-2x_jy_k)r-\left\langle \nu^H,Y_j\right\rangle R^2x_k\right),\\
&\la\na_{Y_k}\nu^H,Y_j\ra=Y_k(\left\langle \nu^H,Y_j\right\rangle)
=\frac{1}{r^2R^2}\left((2y_jy_k+\delta_{jk}r^2+2x_jx_k)r-\left\langle \nu^H,Y_j\right\rangle R^2y_k\right),
\end{align*}
which we can rewrite using \eqref{eq: coeff of nuH} in the following way
\begin{align}\label{pnuhsfera}
\mathcal{P}_H(\na_Z\nu^H)&=\frac{1}{R^2}\left(\frac{2}{r}\left(\la\xi^H,Z\ra\xi^H+\la J\xi^H,Z\ra J\xi^H\right)+rZ+\frac{t-t_0}{r}JZ-\frac{R^2}{r^2}\la\xi^H,Z\ra\nu^H\right)\notag\\
&=\frac{1}{R^2}\left(2r\la \eta,Z\ra\eta+rZ+\frac{t-t_0}{r}JZ-\frac{t-t_0}{r}\la\eta,Z\ra\nu^H\right)
\end{align}
for any horizontal vector $Z$. It is then easy to check that
\begin{equation}\label{curvsphere}
H_M(\xi)=\frac{2n+1}{2n-1} \frac{r}{R^2}.
\end{equation}
Also, recalling Definition \ref{defshape} and using \eqref{componu} and \eqref{pnuhsfera}, we can recognize 
$$
A_M(Z)= \frac{2r}{R^2}\la \eta,Z\ra\eta+ \frac{r}{R^2}Z\qquad \forall\, Z\in \mathcal{H}\cap T M.
$$
According to Definition \ref{def: umbilical}, when $n\geq 2$ this is saying that $M$ is umbilic with $l(\xi)=3 k(\xi)$ and $k(\xi)=\frac{|\xi^H|}{R^2}$.
\end{example}

It is well known in the literature that, whenever $n\geq 2$, the horizontal and tangent vector fields in $\mathcal{H}\cap TM$ satisfy an H\"ormander type property as they can reproduce any tangent direction via commutation. If $M$ is also umbilic such information can be made very precise and it is encoded in the following lemma.

\begin{lemma}\label{commutator} Let $n\geq 2$. For $\xi\in M\smallsetminus S_M$ denote $$\mathcal{H}^0_\xi={\rm{span}}\{V_1, W_1,\ldots, V_{n-1}, W_{n-1}\}.$$
If $M$ is umbilical then
\begin{align*}
&{\rm{span}}\{Z, [Z_1, Z_2]\,:\, \mbox{with }Z, Z_1, Z_2 \in \mathcal{H}^0_\xi\}=\\
&={\rm{span}}\left\{ V_1, W_1,\ldots, V_{n-1}, W_{n-1}, \tau-\frac{k(\xi)|\mathcal{P}_H(\nu)|}{2}\,\eta\right\}.
\end{align*}
\end{lemma}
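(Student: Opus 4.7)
My plan is to decompose $[Z_1,Z_2]$, for $Z_1,Z_2\in\mathcal{H}^0_\xi$, in the orthonormal frame $\{\tau,\eta,V_1,W_1,\ldots,V_{n-1},W_{n-1}\}$ of $T_\xi M$. Since $Z_1,Z_2$ are both tangent to $M$, so is $[Z_1,Z_2]$, and its $V_i,W_j$-components automatically sit inside $\mathcal{H}^0_\xi$. Hence the content of the lemma is that the $(\tau,\eta)$-component of every such bracket is a scalar multiple of the vector $\tau-\tfrac{k|\mathcal{P}_H(\nu)|}{2}\eta$, with a nonzero multiple being actually realized (so that both inclusions hold).

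For the $\tau$-component the ingredients are already available: \eqref{eq: relations J} gives $\la [Z_1,Z_2],T\ra=4\la J(Z_1),Z_2\ra$, while \eqref{aggcommu} gives $\la[Z_1,Z_2],\nu^H\ra=-\tfrac{4\la\nu,T\ra}{|\mathcal{P}_H(\nu)|}\la J(Z_1),Z_2\ra$. Since $\tau=\la\nu,T\ra\nu^H-|\mathcal{P}_H(\nu)|T$ and $\la\nu,T\ra^2+|\mathcal{P}_H(\nu)|^2=1$, a direct combination yields
\begin{equation*}
\la[Z_1,Z_2],\tau\ra\,=\,-\frac{4\la J(Z_1),Z_2\ra}{|\mathcal{P}_H(\nu)|}.
\end{equation*}
For the $\eta$-component I will first establish, by expanding $\nu^H=\sum_j(p_jX_j+q_jY_j)$ and using the explicit Christoffel symbols \eqref{eq: cov der}, the identity $\mathcal{P}_H(\na_Z\eta)=-J(\mathcal{P}_H(\na_Z\nu^H))$ valid for every horizontal $Z$ (the $J$-operator commutes with the horizontal projection of the connection on horizontal fields because $\na_Z X_j$ and $\na_Z Y_j$ are purely vertical for horizontal $Z$). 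Plugging in the expression for $\mathcal{P}_H(\na_Z\nu^H)$ coming from Definition \ref{defshape}, using $J^2=-I$ and $J\nu^H=-\eta$, and then inserting the umbilic form $A_M(Z)=(l-k)\la\eta,Z\ra\eta+kZ$ — noting that the $(l-k)$-piece disappears because it produces $\la\nu^H,Z_2\ra$ when tested against $Z_2\in\mathcal{H}\cap TM$ — I will get the clean formula
\begin{equation*}
\la\na_{Z_1}\eta,Z_2\ra = -k\la J(Z_1),Z_2\ra + \frac{2\la\nu,T\ra}{|\mathcal{P}_H(\nu)|}\bigl(\la Z_1,Z_2\ra - \la\eta,Z_1\ra\la\eta,Z_2\ra\bigr).
\end{equation*}
For $Z_1,Z_2\in\mathcal{H}^0_\xi$ the last term vanishes; antisymmetrizing, the symmetric piece in $\la Z_1,Z_2\ra$ cancels while $\la J(Z_1),Z_2\ra$ is antisymmetric, so using orthonormality of the frame to write $\la[Z_1,Z_2],\eta\ra=-\la Z_2,\na_{Z_1}\eta\ra+\la Z_1,\na_{Z_2}\eta\ra$ one obtains $\la[Z_1,Z_2],\eta\ra=2k\la J(Z_1),Z_2\ra$.

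Combining the two components, $[Z_1,Z_2]$ equals an element of $\mathcal{H}^0_\xi$ plus $-\tfrac{4\la J(Z_1),Z_2\ra}{|\mathcal{P}_H(\nu)|}\bigl(\tau-\tfrac{k|\mathcal{P}_H(\nu)|}{2}\eta\bigr)$. This gives immediately the $\subseteq$ inclusion; the reverse one is obtained by choosing $Z_1=V_1$, $Z_2=W_1$, for which $\la J(V_1),W_1\ra=\la W_1,W_1\ra=1\neq 0$, so $[V_1,W_1]$ produces a nonzero multiple of $\tau-\tfrac{k|\mathcal{P}_H(\nu)|}{2}\eta$ modulo $\mathcal{H}^0_\xi$. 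The only delicate step is really the second one: the main subtlety is handling covariant derivatives of the field $\eta=-J\nu^H$ (which is defined only on $M\setminus S_M$) and justifying that $J$ passes through the horizontal part of $\na_Z$, which is why I prefer the explicit basis expansion rather than an abstract identity.
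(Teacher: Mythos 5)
Your proposal is correct and follows essentially the same route as the paper: both reduce the lemma to the two component computations $\la[Z_1,Z_2],\tau\ra=-\tfrac{4}{|\mathcal{P}_H(\nu)|}\la J(Z_1),Z_2\ra$ (via \eqref{eq: relations J} and \eqref{aggcommu}) and $\la[Z_1,Z_2],\eta\ra=2k\la J(Z_1),Z_2\ra$ (via the umbilicality of $A_M$), and then observe that these coefficients combine into a multiple of $\tau-\tfrac{k|\mathcal{P}_H(\nu)|}{2}\eta$. Your derivation of the $\eta$-component by differentiating $\eta$ directly through $\mathcal{P}_H(\na_Z\eta)=-J(\mathcal{P}_H(\na_Z\nu^H))$ is only a cosmetic variant of the paper's use of $J(\na_VW)=\na_V(JW)$ to move $J$ onto the bracket before pairing with $\nu^H$.
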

\begin{proof}
Fix any $Z_1, Z_2 \in \mathcal{H}^0_\xi$. By \eqref{eq: relations J} and \eqref{aggcommu} we have
\begin{align*}
&\left\langle [Z_1,Z_2], \tau\right\rangle=\left\langle \nu, T\right\rangle \left\langle [Z_1,Z_2], \nu^H\right\rangle - |\mathcal{P}_H \nu| \left\langle [Z_1,Z_2], \nu^H\right\rangle \\
&=-4\left(\frac{\left\langle \nu, T\right\rangle^2}{|\mathcal{P}_H \nu|} +|\mathcal{P}_H \nu|\right)\left\langle J(Z_1),Z_2\right\rangle =\frac{-4}{|\mathcal{P}_H \nu|}\left\langle J(Z_1),Z_2\right\rangle,
\end{align*}
which says that ${\rm{span}}\{Z, [Z_1, Z_2]\,:\, \mbox{with }Z, Z_1, Z_2 \in \mathcal{H}^0_\xi\}$ is at least ($2n-1$)-dimensional. On the other hand, using also \eqref{defeta} and the commutation property of $J$ and $\nabla$ together with the umbilicality of $M$, we obtain
\begin{align*}
&\left\langle [Z_1,Z_2], \eta\right\rangle=\left\langle \nabla_{Z_1}JZ_2-\nabla_{Z_2}JZ_1, \nu^H\right\rangle = \left\langle J(Z_1),\nabla_{Z_2}\nu^H \right\rangle - \left\langle J(Z_2),\nabla_{Z_1}\nu^H \right\rangle\\
&= \left\langle J(Z_1),A_M\left(Z_2\right)+ \frac{2\left\langle \nu, T\right\rangle}{|\mathcal{P}_H \nu|}J(Z_2) \right\rangle - \left\langle J(Z_2),A_M\left(Z_1\right)+ \frac{2\left\langle \nu, T\right\rangle}{|\mathcal{P}_H \nu|}J(Z_1) \right\rangle\\
&=2k\left\langle J(Z_1),Z_2\right\rangle.
\end{align*}
Hence we get
$$\left\langle [Z_1,Z_2], \eta+ \frac{k(\xi)|\mathcal{P}_H(\nu)|}{2}\tau\right\rangle = 0\qquad\mbox{for every } Z_1 , Z_2 \in \mathcal{H}^0_\xi.
$$
This implies that ${\rm{span}}\{Z, [Z_1, Z_2]\,:\, \mbox{with }Z , Z_1 , Z_2\in \mathcal{H}^0_\xi\}$ is exactly ($2n-1$)-dimensional and the vector $\tau-\frac{k(\xi)|\mathcal{P}_H(\nu)|}{2}\,\eta$ belongs to such vector space as desired.
\end{proof}

\section{Darboux-type results}\label{sec3}

\subsection{The case of $\mathbb{H}^1$}

In this section we first treat the ($n=1$)-dimensional case by providing the proof of Theorem \ref{main}. As we mentioned in the introduction and recalled in \eqref{defHn1}, for surfaces $M$ in $\mathbb{H}^1$ the main role is played by the integral curves of the only horizontal and tangent vector field $\eta$. A (naive) way to describe our approach to Theorem \ref{main} is to draw a parallelism with the classical problem of identifying the pieces of circles as the only smooth connected curves $\Gamma$ in $\R^2$ with non-zero constant curvature $K=K_\Gamma$. Among the many ways to show this property, a very direct one is to consider (denoting with $p=(p_1,p_2)$ the generic point in $\R^2$ and with $N$ a choice for the unit normal to $\Gamma$) the two functions
\begin{equation}\label{twof}
\begin{cases}
f_1(p)=K p_1- \la N,\partial_{p_1}\ra, \qquad\quad f_1: \Gamma \to \R,\\
f_2(p)=K p_2- \la N,\partial_{p_2}\ra, \qquad\quad f_2: \Gamma \to \R.
\end{cases}
\end{equation}
By differentiating along a unit tangent vector $U$ and using $K=\left\langle \nabla_U N,U\right\rangle$, one recognizes that $U f_1=Uf_2= 0$ on $\Gamma$. Thus, there have to exist two constants $c_1,c_2$ such that $f_i\equiv c_i$, $i=1,2$, and we have $1=\la N,\partial_{p_1}\ra^2+\la N,\partial_{p_2}\ra^2=\left(K p_1 - c_1\right)^2+\left(K p_2 - c_2\right)^2$ for $p\in\Gamma$, i.e.
$\Gamma$ is contained in the circle of radius $\frac{1}{|K|}$ and center $(\frac{c_1}{K},\frac{c_2}{K})$. If we bring back the attention to the case of the 2-dimensional surface $M$ in $\mathbb{H}^1$, we emphasize that in Theorem \ref{main} we prescribe the curvature $H_M(\xi)$ to be proportional to $|\xi^H|$ (see also \eqref{curvsphere} in Example \ref{exsphere}). The term $|\xi^H|$ corresponds to the distance (either Euclidean distance or gauge-related distance, as they coincide in this case) to the vertical line $L_v$ defined by
$$L_v=\{(0,0,t)\in \mathbb{H}^1\,:\, t\in\R\}.$$ 
Having this in mind, as well as the notations introduced in Section \ref{sec2}, we define the two functions
\begin{equation}\label{defphis}
\begin{cases}
\varphi_h(\xi)=\frac{1}{3}H_M(\xi)|\xi^H|^2 - \la \nu^H,\xi^H\ra, \qquad\quad \varphi_h: M\smallsetminus S_M \to \R,\\
\varphi_v(\xi)=\frac{1}{3}H_M(\xi) \frac{t}{|\xi^H|} - \la \eta,\frac{\xi^H}{|\xi^H|}\ra\qquad\quad\,\,\,\,\,\,\,\, \varphi_v: M\smallsetminus \left(S_M\cup L_v\right) \to \R.
\end{cases}
\end{equation}
The functions $\varphi_h$ and $\varphi_v$ have a different role. In the following lemma we show that $\varphi_h$ is in fact constant along the integral curves of $\eta$, whereas the behaviour of $\varphi_v$ is subordinate to the one of $\varphi_h$.

\begin{lemma}\label{lem1}
Let $M$ be a smooth surface in $\H^1$ which is connected and orientable. Let also $\omega$ be a relatively open set contained in $M\smallsetminus S_M$. Suppose there exists $c\in\R$ such that $H_M(\xi)=c|\xi^H|$ for $\xi\in\omega $. Then we have
$$\begin{cases}
\eta(\varphi_h)=0\qquad\qquad\quad\,\,\,\, \mbox{ in }\omega,\\
\eta(\varphi_v)=\frac{\la \nu^H,\xi^H\ra}{|\xi^H|^3}\varphi_h \qquad \mbox{ in }\omega\smallsetminus L_v.
\end{cases}$$
\end{lemma}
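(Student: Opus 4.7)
The plan is to differentiate $\varphi_h$ and $\varphi_v$ directly along $\eta$, using the structural identities collected in Section \ref{sec2}. First I would gather the basic $\eta$-derivatives of the ingredients in \eqref{defphis}. Applying \eqref{eq cov der zeta xih} with $Z=\eta$ I read off $\eta(|\xi^H|^2)=2\la\eta,\xi^H\ra$ (so $\eta(|\xi^H|)=\la\eta,\xi^H\ra/|\xi^H|$), while \eqref{tderivative} together with $J\eta=\nu^H$ (a consequence of $\eta=-J\nu^H$ and $J^2=-\mathrm{Id}$ on $\mathcal{H}$) yields $\eta(t)=-2\la\nu^H,\xi^H\ra$. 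The key horizontal computation is the explicit expansion of $\na_\eta\nu^H$ in $\H^1$ in the orthonormal frame $\{\eta,\nu^H,T\}$: the $\nu^H$-component vanishes from $|\nu^H|^2=1$, \eqref{defHn1} identifies the $\eta$-component as $H_M$, and \eqref{deZnuhT} gives $2$ for the $T$-component, so
\begin{equation*}
\na_\eta\nu^H=H_M\,\eta+2T.
\end{equation*}

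For the first claim I would expand $\eta(\varphi_h)$. Using that $\nu^H$ is orthogonal to both $\eta$ and $T$, combined with the formula above and \eqref{eq cov der zeta xih}, one gets $\eta(\la\nu^H,\xi^H\ra)=\la\na_\eta\nu^H,\xi^H\ra=H_M\la\eta,\xi^H\ra$; together with $\eta(|\xi^H|^2)=2\la\eta,\xi^H\ra$ this simplifies everything to
\begin{equation*}
\eta(\varphi_h)=\tfrac{1}{3}\eta(H_M)\,|\xi^H|^2-\tfrac{1}{3}H_M\,\la\eta,\xi^H\ra.
\end{equation*}
Substituting the hypothesis $H_M=c|\xi^H|$ (so $\eta(H_M)=c\la\eta,\xi^H\ra/|\xi^H|$) makes the two terms cancel.

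For the second claim I also need $\na_\eta\eta$. Differentiating $\la\eta,\nu^H\ra=0$ along $\eta$ and invoking the formula for $\na_\eta\nu^H$ gives $\la\na_\eta\eta,\nu^H\ra=-H_M$; since $|\eta|=1$ kills the $\eta$-component and $\xi^H$ is horizontal, the $T$-component is irrelevant, hence $\la\na_\eta\eta,\xi^H\ra=-H_M\la\nu^H,\xi^H\ra$. I would then expand $\eta(\varphi_v)$, use the orthogonal decomposition $|\xi^H|^2=\la\eta,\xi^H\ra^2+\la\nu^H,\xi^H\ra^2$ (valid because $\{\eta,\nu^H\}$ is an orthonormal basis of $\mathcal{H}$ when $n=1$), and observe that under $H_M=c|\xi^H|$ the two contributions proportional to $t$ coming from $\eta(H_M)\tfrac{t}{|\xi^H|}$ and $H_M\,\eta(\tfrac{t}{|\xi^H|})$ cancel exactly. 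What remains reorganizes as
\begin{equation*}
\eta(\varphi_v)=\frac{\la\nu^H,\xi^H\ra}{|\xi^H|^3}\Bigl(\tfrac{1}{3}H_M|\xi^H|^2-\la\nu^H,\xi^H\ra\Bigr)=\frac{\la\nu^H,\xi^H\ra}{|\xi^H|^3}\,\varphi_h.
\end{equation*}

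The computations are essentially bookkeeping, so the expected difficulty is not conceptual but combinatorial: one must carefully track the $t$-dependent pieces and the quadratic $\la\eta,\xi^H\ra^2$ term in $\eta(\varphi_v)$ and recognize the particular grouping that reassembles $\varphi_h$ on the right-hand side. The fact that the $t$-terms cancel only under $H_M\propto|\xi^H|$ is the structural feature of this prescribed-curvature condition that makes the lemma work, and it is precisely this cancellation that I expect to be the critical step to get right.
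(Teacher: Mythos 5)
Your proposal is correct and follows essentially the same route as the paper: both compute $\eta(|\xi^H|)$, $\eta(t)$, $\eta(\la\nu^H,\xi^H\ra)$ and $\eta(\la\eta,\xi^H\ra)$ from \eqref{eq cov der zeta xih}, \eqref{tderivative}, \eqref{defHn1} and \eqref{deZnuhT}, and then use the decomposition $|\xi^H|^2=\la\eta,\xi^H\ra^2+\la\nu^H,\xi^H\ra^2$ to reassemble $\varphi_h$ in $\eta(\varphi_v)$. The only cosmetic differences are that you write out $\na_\eta\nu^H=H_M\eta+2T$ explicitly and substitute $H_M=c|\xi^H|$ at the end rather than at the start.
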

\begin{proof}
By \eqref{eq cov der zeta xih} and \eqref{defHn1}, we have
\begin{equation}\label{dernuhxi}
\eta(|\xi^H|)=\frac{\left\langle \eta,\xi^H\right\rangle}{|\xi^H|} \quad\mbox{ and }\quad \eta(\la \nu^H,\xi^H\ra)=H_M \left\langle \eta,\xi^H\right\rangle,
\end{equation}
where in the second equality we also exploited the fact that $\mathcal{H}_\xi$ is generated by the two orthogonal unit vectors $\eta$ and $\nu^H$. Hence, for $\xi \in \omega$, we obtain
$$
\eta(\varphi_h)=\eta\left(\frac{c}{3}|\xi^H|^3-\la \nu^H,\xi^H\ra\right)=c|\xi^H|^2\frac{\left\langle \eta,\xi^H\right\rangle}{|\xi^H|} - H_M(\xi) \left\langle \eta,\xi^H\right\rangle =0.
$$
On the other hand, by \eqref{tderivative}-\eqref{defeta} we have 
\begin{equation}\label{etat}
\eta(t)=-2\left\langle \nu^H,\xi^H\right\rangle
\end{equation}
and, using also \eqref{eq cov der zeta xih}-\eqref{eq: relations J},
\begin{equation}\label{deretahxi}
\eta(\la \eta,\xi^H\ra)=1+\left\langle \nabla_\eta \eta, \xi^H\right\rangle= 1+\left\langle \nabla_\eta \nu^H, J\xi^H\right\rangle =1-H_M \left\langle \nu^H,\xi^H\right\rangle.
\end{equation}
Recalling that $|\xi^H|^2=\la \eta,\xi^H\ra^2+\la \nu^H,\xi^H\ra^2$, we then infer
\begin{align*}
&\eta(\varphi_v)=\eta\left(\frac{c}{3}t-\frac{\la \eta,\xi^H\ra}{|\xi^H|}\right)\\
&=\frac{-2c}{3}\left\langle \nu^H,\xi^H\right\rangle-\frac{1}{|\xi^H|}+H_M(\xi) \left\langle \nu^H,\frac{\xi^H}{|\xi^H|}\right\rangle + \frac{1}{|\xi^H|^3}\la \eta,\xi^H\ra^2\\
&=\left\langle \nu^H,\frac{\xi^H}{|\xi^H|}\right\rangle\left(\frac{-2c}{3}|\xi^H| + H_M(\xi)\right)-\frac{1}{|\xi^H|^3}\left(|\xi^H|^2-\la \eta,\xi^H\ra^2\right)\\
&=\frac{\left\langle \nu^H,\xi^H\right\rangle}{|\xi^H|^3}\left(\frac{1}{3}H_M(\xi)|\xi^H|^2-\left\langle \nu^H,\xi^H\right\rangle\right)=\frac{\left\langle \nu^H,\xi^H\right\rangle}{|\xi^H|^3}\varphi_h(\xi)
\end{align*}
whenever $\xi^H\neq 0$. This completes the proof of the lemma.
\end{proof}

Keeping in mind the comparison between the derivatives of \eqref{defphis} along $\eta$ in Lemma \ref{lem1} and the derivatives along the curve $\Gamma$ of \eqref{twof}, it is no surprise that we want $\varphi_h$ to vanish identically throughout $M$. This is exactly what we show in the next lemma. We will deduce this fact from the global properties of the integral curves of $\eta$ and from the assumption $S_M\subset L_v$.

\begin{lemma}\label{lem2}
Let $M$ be a smooth surface in $\H^1$ which is connected, orientable, compact, and without boundary. Assume that $S_M\subseteq M\cap L_v$, and that there exists $c\neq 0$ such that $H_M(\xi)=c|\xi^H|$ for every point $\xi\in M\smallsetminus S_M$. Then $c>0$, $\varphi_h\equiv 0$, and every integral curve of $\eta$ reaches $S_M$.
\end{lemma}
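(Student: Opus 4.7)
The plan is to prove the three conclusions $c>0$, $\varphi_h\equiv 0$, and reachability of $S_M$ through three connected steps, with step 2 being the main obstacle.

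\textbf{Step 1 ($c>0$).} I first establish $c>0$ by a maximum-principle argument. Since $M\not\subset L_v$, the smooth function $|\xi^H|^2=x^2+y^2$ attains a strictly positive maximum at some non-characteristic point $\xi^*\in M$. There, the tangential derivative $\eta(|\xi^H|^2)=2\la\eta,\xi^H\ra$ vanishes by \eqref{eq cov der zeta xih}, so $\xi^H(\xi^*)$ is a scalar multiple of $\nu^H(\xi^*)$. The Euclidean convexity of $|\xi^H|^2$ as a function on $\R^{3}$ forces its ambient gradient $2\xi^H$ to point along the outward normal $\nu(\xi^*)$, pinning down $\la\nu^H,\xi^H\ra|_{\xi^*}=|\xi^H(\xi^*)|$. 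The second-order optimality $\eta^2(|\xi^H|^2)|_{\xi^*}\leq 0$, expanded using \eqref{deretahxi}, reduces to $2(1-c|\xi^H(\xi^*)|^2)\leq 0$, which forces $c>0$.

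\textbf{Step 2 ($\varphi_h\equiv 0$).} This step is the crux of the proof, and what I expect to be the main difficulty. Suppose by contradiction that some integral curve $\gamma$ of $\eta$ has $c_0:=\varphi_h|_\gamma\neq 0$ (well-defined as a constant by Lemma \ref{lem1}). Since $\varphi_h$ extends continuously to $M$ with value $0$ on $S_M\subseteq L_v$, the curve $\gamma$ cannot accumulate on $S_M$, so its forward orbit closure is a compact flow-invariant subset $K$ of $M\smallsetminus S_M$; by Krylov--Bogolyubov, $K$ carries an invariant probability measure $\mu$. On $\mathrm{supp}(\mu)$ one has $|\xi^H|>0$ (otherwise $c_0=0$ by continuity), so $\varphi_v$ is smooth and bounded there; the invariance of $\mu$ gives $\int\eta(t)\,d\mu=\int\eta(\varphi_v)\,d\mu=0$. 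Combining these with \eqref{etat}, Lemma \ref{lem1}, and the identity $\la\nu^H,\xi^H\ra=\tfrac{c}{3}|\xi^H|^3-c_0$, one obtains
\[
\int|\xi^H|^3\,d\mu=\tfrac{3c_0}{c}, \qquad \int|\xi^H|^{-3}\,d\mu=\tfrac{c}{3c_0}.
\]
Their product equals $1=\bigl(\int 1\,d\mu\bigr)^2$, so the Cauchy--Schwarz inequality between the functions $|\xi^H|^{3/2}$ and $|\xi^H|^{-3/2}$ is saturated, and $|\xi^H|$ is constant $\mu$-a.e.; continuity then upgrades this to $|\xi^H|\equiv r_0$ on $\mathrm{supp}(\mu)$. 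Consequently $\la\nu^H,\xi^H\ra$ is constant on $\mathrm{supp}(\mu)$, and the first identity forces it to vanish. Then $\la\eta,\xi^H\ra=\pm r_0$ must be constant along any orbit inside $\mathrm{supp}(\mu)$ (by continuity, since it takes values in the discrete set $\{\pm r_0\}$), which contradicts \eqref{deretahxi}: the latter gives $\eta\la\eta,\xi^H\ra=1-c|\xi^H|\la\nu^H,\xi^H\ra\equiv 1$ along such an orbit. Therefore $c_0=0$ on every orbit, i.e., $\varphi_h\equiv 0$.

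\textbf{Step 3 (every orbit reaches $S_M$).} With $\varphi_h\equiv 0$, the identity $\la\nu^H,\xi^H\ra=\tfrac{c}{3}|\xi^H|^3$ combined with \eqref{etat} gives $\eta(t)=-\tfrac{2c}{3}|\xi^H|^3\leq 0$, with strict inequality off $L_v\cap M$. Hence $t$ is monotonically non-increasing along every integral curve and, being bounded on the compact $M$, converges to some $t_\infty\in\R$ as $s\to+\infty$. The $\omega$-limit set of the orbit is therefore contained in $\{t=t_\infty\}\cap M$; flow-invariance forces $\eta(t)=0$ on it, hence $|\xi^H|=0$, so the $\omega$-limit lies in $L_v\cap\{t=t_\infty\}\cap M=\{(0,0,t_\infty)\}$. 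Since $\eta$ has no equilibria outside $S_M$, this accumulation point must belong to $S_M$, proving that every integral curve of $\eta$ reaches $S_M$.
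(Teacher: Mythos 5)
Your proof is correct, and Steps 2 and 3 take a genuinely different route from the paper's. Step 1 is essentially the paper's Step I (maximum of $|\xi^H|^2$, first-order condition forcing $\langle \eta,\xi^H\rangle=0$ and $\langle \nu^H,\xi^H\rangle=|\xi^H|>0$, second-order condition via \eqref{deretahxi}). For Step 2 the paper works along a single integral curve: writing $\langle\nu^H,\xi^H\rangle=r\cos\theta$ and $\langle\eta,\xi^H\rangle=r\sin\theta$, it shows that for $\varphi_0>0$ the angle $\theta(s)$ decreases monotonically and that the $t$-coordinate drops by an explicit fixed negative amount over each full revolution (via the change of variables $s\mapsto\theta$), so $t\to-\infty$; the case $\varphi_0<0$ follows at once from $t'\le 2\varphi_0$. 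Your replacement of this computation by an invariant measure (Krylov--Bogolyubov) together with the two averaged identities $\int\eta(t)\,d\mu=\int\eta(\varphi_v)\,d\mu=0$ and the equality case of Cauchy--Schwarz is softer and more conceptual; note that your first identity $\int|\xi^H|^3\,d\mu=3c_0/c$ already kills the case $c_0/c<0$, paralleling the paper's case split, and the Cauchy--Schwarz step handles $c_0/c>0$. What the paper's explicit computation buys is the quantitative drop $t(s_{k+1})-t(s_k)=-6/c$, which it reuses verbatim in its Step III; your Step 3 instead uses monotonicity of $t$ and an $\omega$-limit argument, which is cleaner but needs two small additions to be airtight: (i) the integral curve may have a finite maximal interval of existence, in which case (since $|\eta|=1$ and $M$ is compact) it converges in finite time to a point of $S_M$ and you are already done, so the $\omega$-limit argument is needed only for curves defined for all $s\ge 0$; and (ii) the $\omega$-limit set may a priori contain characteristic points, where the flow is undefined, so the invariance argument yielding $\eta(t)=0$ must be applied only at non-characteristic limit points --- this costs nothing because $S_M\subseteq L_v$, so one still gets $\Omega\subseteq L_v\cap\{t=t_\infty\}\cap M=\{(0,0,t_\infty)\}$ and then concludes via the absence of equilibria of $\eta$. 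With these two remarks supplied, the argument is complete.
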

\begin{proof}
Let us divide the proof in three steps.\\
\noindent{\it Step I. } In the first step we shall show that $c>0$. By the compactness of $M$ the function $\frac{1}{2}|\xi^H|^2$ attains its maximum at a point $\xi_1\in M\smallsetminus L_v$. Since we have $S_M\subseteq M\cap L_v$, at $\xi=\xi_1$ we have
$$
0=\eta\left(\frac{1}{2}|\xi^H|^2\right)=\left\langle \eta, \xi_1^H\right\rangle\quad\mbox{and}\quad 0=\tau\left(\frac{1}{2}|\xi^H|^2\right)=\left\langle \nu,T\right\rangle \left\langle \nu^H, \xi_1^H\right\rangle.
$$ 
Since $\la \nu^H,\xi_1^H\ra^2=\la \eta,\xi_1^H\ra^2+\la \nu^H,\xi_1^H\ra^2= |\xi_1^H|^2>0$, we have that $\left\langle \nu,T\right\rangle=0$ and therefore
$$
\left\langle \nu^H, \xi_1^H\right\rangle = \frac{1}{|\mathcal{P}_H \nu|} \left\langle \nu, \xi_1 \right\rangle >0
$$
where the positive sign is a consequence of the maximality condition and the fact that $\nu$ is the outward normal. Hence, by \eqref{deretahxi} at the maximum point $\xi=\xi_1$ we obtain
$$
1-H_M(\xi_1) \left\langle \nu^H, \xi_1^H\right\rangle= \eta^2\left(\frac{1}{2}|\xi^H|^2\right)\leq 0
$$
which says that
$$
c\geq \frac{1}{|\xi_1^H|\left\langle \nu^H, \xi_1^H\right\rangle}>0.
$$
\noindent{\it Step II. } We now prove that
$$
\varphi_h\equiv 0.
$$ 
By contradiction we shall assume the existence of $\xi_0\in M\smallsetminus S_M$ such that $\varphi_h(\xi_0)\neq 0$. Since by definition we have
\begin{equation}\label{realphih}
\varphi_h(\xi)=\frac{c}{3}|\xi^H|^3-\left\langle \nu^H,\xi^H\right\rangle,
\end{equation}
it is clear that $\varphi_h$ vanishes on the vertical line $L_v$. Therefore we know that $\xi_0 \in M\smallsetminus L_v$. Let us consider the integral curve $\gamma$ of $\eta$ starting from $\xi_0$. Lemma \ref{lem1} implies that $\varphi_h$ is constant along $\gamma$, i.e.
$$
\varphi_h(\gamma(s))=\varphi_h(\xi_0)=:\varphi_0.
$$
Since $S_M\subset L_v$ and $\varphi_0\neq 0$, $\gamma$ remains in $M\smallsetminus L_v$ and there is no problem in extending the curve indefinitely. We claim that this fact will contradict the boundedness of $M\supset \gamma$. Denote by $t(s)$, $r(s)$, and $\theta(s)$ the three smooth functions defined for $\xi\in \gamma$ respectively by
\begin{align*}
&t(s)=\gamma_3(s),\qquad r(s)=\left(\gamma^2_1(s)+\gamma^2_2(s)\right)^{\frac{1}{2}}=|\xi^H|,\quad\mbox{ and }\\
&\begin{cases}
\cos{\left(\theta(s)\right)} = \left\langle \nu^H,\frac{\xi^H}{|\xi^H|}\right\rangle, \\
\sin{\left(\theta(s)\right)} = \left\langle \eta,\frac{\xi^H}{|\xi^H|}\right\rangle.
\end{cases}
\end{align*}
From \eqref{realphih} we readily recognize 
$$
\frac{c}{3}r^3(s)-r(s)\cos{\left(\theta(s)\right)}=\varphi_0,
$$
which implies that along the curve $\gamma$ the positive function $r(s)$ is in fact a function of $\cos{\left(\theta(s)\right)}$ (in the sense that it is uniquely determined by the value $\cos{\left(\theta(s)\right)}$). As we will make use of this fact, we set the notation $R(\cos(\theta(s)))=r(s)$. From \eqref{etat} we have that
\begin{equation}\label{tprime}
t'(s)=-2r(s)\cos{\left(\theta(s)\right)}=2\varphi_0-\frac{2c}{3}r^3(s).
\end{equation}
Thus, if $\varphi_0<0$ then $t'(s)\leq 2\varphi_0<0$ and $t(s)$ would be forced to be unbounded providing an immediate contradiction. We can then assume $\varphi_0>0$. Since from \eqref{dernuhxi} and \eqref{deretahxi} we have
\begin{align*}
\eta\left(\arctan\left(\frac{\left\langle \eta,\xi^H\right\rangle}{\left\langle \nu^H,\xi^H\right\rangle}\right)\right)&=\frac{(1-H_M\left\langle \nu^H,\xi^H\right\rangle)\left\langle \nu^H,\xi^H\right\rangle - H_M \left\langle \eta,\xi^H\right\rangle^2 }{\left\langle \eta,\xi^H\right\rangle^2+\left\langle \nu^H,\xi^H\right\rangle^2}\\
&=\frac{\left\langle \nu^H,\xi^H\right\rangle-H_M |\xi^H|^2}{|\xi^H|^2}= \frac{-\frac{2c}{3} |\xi^H|^3 - \varphi_h(\xi)}{|\xi^H|^2},
\end{align*}
we obtain
\begin{equation}\label{thetaprime}
\theta'(s)=\frac{-\frac{2c}{3} r^3(s) - \varphi_0}{r^2(s)}.
\end{equation}
The assumption $\varphi_0>0$ implies that $\theta(s)$ is strictly decreasing, so that the angle formed by (the horizontal projections of) $\nu^H$ and $\xi^H$ attains every value in $[0,2\pi]$ infinitely many times along $\gamma$. We can then consider a strictly increasing sequence of values $\{s_k\}_{k\in\N}$ such that $\theta(s_k)-\theta(s_{k+1})=2\pi$ for all $k\in\N$. By exploiting \eqref{tprime} and \eqref{thetaprime} we notice that
\begin{align*}
&t(s_{k+1})-t(s_k)=\int_{s_k}^{s_{k+1}} t'(s) ds \\
&= -2 \int_{s_k}^{s_{k+1}} r(s)\cos(\theta(s)) ds=2 \int_{s_k}^{s_{k+1}} \frac{r^3(s)\cos(\theta(s))}{\frac{2c}{3} r^3(s) + \varphi_0} \theta'(s) ds\\
&= \frac{2}{c} \int_{s_k}^{s_{k+1}} \frac{(cr^3(s) - r(s)\cos(\theta(s)) + r(s)\cos(\theta(s)))\cos(\theta(s))}{\frac{2c}{3} r^3(s) + \varphi_0} \theta'(s) ds\\
&=  \frac{2}{c} \int_{s_k}^{s_{k+1}}\cos(\theta(s))\theta'(s) ds + \frac{2}{c} \int_{s_{k}}^{s_{k+1}}  \frac{r(s)\cos^2(\theta(s))}{\frac{2c}{3} r^3(s) + \varphi_0} \theta'(s) ds\\
&=\frac{2}{c} \int_{s_k}^{s_{k+1}}  \frac{r(s)\cos^2(\theta(s))}{\frac{2c}{3} r^3(s) + \varphi_0} \theta'(s) ds\\
&=-\frac{2}{c} \int_{\theta(s_{k+1})}^{\theta(s_{k})}  \frac{R(\cos(\sigma))\cos^2(\sigma)}{\frac{2c}{3} R^3(\cos(\sigma)) + \varphi_0} d\sigma=-\frac{2}{c} \int_{\theta(s_{1})-2\pi}^{\theta(s_{1})}  \frac{R(\cos(\sigma))\cos^2(\sigma)}{\frac{2c}{3} R^3(\cos(\sigma)) + \varphi_0} d\sigma
\end{align*}
for every $k\in\N$. This implies, also in the case $\varphi_0>0$, the unboundedness of $t(s)$ since
$$
t(s_{k+1})=t(s_1)-k\frac{2}{c}\int_{\theta(s_{1})-2\pi}^{\theta(s_{1})}  \frac{R(\cos(\sigma))\cos^2(\sigma)}{\frac{2c}{3} R^3(\cos(\sigma)) + \varphi_0} d\sigma \to -\infty \quad\mbox{ as }k\to\infty.
$$
Therefore, under both the assumptions $\varphi_0<0$ and $\varphi_0>0$, we have reached a contradiction. This completes the proof of the identity $\varphi_h\equiv 0$.

\noindent{\it Step III. } We finally show that 
$$
S_M\not=\emptyset \mbox{ and every integral curve $\gamma$ of $\eta$ starting from any $\xi_0\in M\smallsetminus S_M$ reaches $S_M$.}
$$
Let us exploit the same notations of {{\it Step II}}. Arguing again by contradiction, we can assume that the curve $\gamma$ can be extended indefinitely. We stress that $r(s)$ can vanish (at points in $L_v\smallsetminus S_M$) but only at isolated points on the curve since $\eta_{\bar{\xi}}$ belongs to ${\rm{span}}\{\de_x, \de_y\}$ at points $\bar{\xi}\in L_v\smallsetminus S_M$. Also, the functions $r(s)$ and $\theta(s)$ are smooth outside $L_v$. Since $S_M\subseteq M\cap L_v$, two situations might occur: either there exists $s_0$ such that $\inf_{s\in (s_0,\infty)} r(s)>0$ or there exists a strictly increasing sequence of values $\{s_k\}_{k\in\N}$ such that, for every $k\in\N$, $r(s_k)=0$ and $r(s)>0$ for $s\in (s_k, s_{k+1})$. Since by {{\it Step II}} and \eqref{tprime} we have
\begin{equation}\label{scendee}
t'(s)=-\frac{2c}{3}r^3(s),
\end{equation}
we deduce that the occurrence of the first case leads to an immediate contradiction since $t'(s)\leq -\frac{2c}{3}\left( \inf_s r(s) \right)^3<0$ for $s>s_0$ and $t(s)$ would be unbounded. Hence, we can assume the existence of the sequence $\{s_k\}_{k\in\N}$ satisfying the above assumptions. Fix any $k\in \N$ and consider $s\in (s_k,s_{k+1})$. Using {{\it Step II}}, \eqref{thetaprime}, and {{\it Step I}}, we have
\begin{equation}\label{arrivaa}
\cos{\left(\theta(s)\right)}=\frac{c}{3}r^2(s)>0\qquad\mbox{ and }\qquad \theta'(s)=-\frac{2c}{3}r(s)<0.
\end{equation}
This yields
$$
\begin{cases}
(\cos{\left(\theta(s)\right)}, \sin{\left(\theta(s)\right)})\to (0,+1) \quad\mbox{ as }s\to s_k^+ \\
(\cos{\left(\theta(s)\right)}, \sin{\left(\theta(s)\right)})\to (0,-1) \quad\mbox{ as }s\to s_{k+1}^-. 
\end{cases}
$$ 
Hence we infer
\begin{align*}
&t(s_{k+1})-t(s_k)=\int_{s_k}^{s_{k+1}} t'(s) ds = -2 \int_{s_k}^{s_{k+1}} r(s)\cos(\theta(s)) ds\\
&=\frac{3}{c} \int_{s_k}^{s_{k+1}} \theta'(s) \cos(\theta(s)) ds =\frac{-6}{c}.
\end{align*}
In other words, each time the curve $\gamma$ re-joins the vertical line $L_v$ the $t$-component of the curve drops by a fixed amount. Since we are assuming that $\gamma$ is reaching $L_v$ an infinite number of times, this fact is in contradiction with the compactness of $M$. The proof is then complete.
\end{proof}

We are now ready to complete the proof of Theorem \ref{main}.

\begin{proof}[Proof of Theorem \ref{main}]
We start by noticing that, under our assumptions, the set $S_M$ (which is non-empty by Lemma \ref{lem2}) consists of isolated points. As a matter of fact, since $S_M\subset L_v$, if we had a sequence of points in $S_M$ converging to $\bar{\xi}\in S_M$ then such sequence would be in $L_v$ and at the point $\bar{\xi}$ the vector field $T$ would be tangent. On the other hand, the tangent space at the characteristic points coincides with the horizontal distribution which is ${\rm{span}}\{\de_x, \de_y\}$ on $L_v$. This argument ensures the fact that the characteristic points are isolated. Therefore, there exist $t_1<t_2<\ldots<t_p$ for some finite $p\in\N$ such that
$$
S_M=\{(0,0,t_1),\ldots,(0,0,t_p)\}.
$$ 
Consider now any point $\xi_0 \in M\cap \{t<t_2\}$ such that $\xi_0\not =(0,0,t_1)$, and consider the integral curve $\gamma$ of $\eta$ starting from $\xi_0$. Using the same notations as in Lemma \ref{lem2} we know that $t(s)$ is decreasing (see \eqref{scendee}) so that $\gamma \subset M\cap \{t<t_2\}$. Exploiting Lemma \ref{lem1} together with the identity $\varphi_h\equiv 0$ showed in Lemma \ref{lem2}, we have that the function $\varphi_v(\xi)=\frac{c}{3}t-\left\langle \eta,\frac{\xi^H}{|\xi^H|}\right\rangle$ is constant along $\gamma$, i.e.
$$
\frac{c}{3}t(s)-\sin(\theta(s))=\varphi_v(\xi_0).
$$
We stress that the previous identity holds true in $\gamma \smallsetminus L_v$, and it can then be extended by continuity on the whole $\gamma$. By Lemma \ref{lem2} we have that $\gamma$ reaches $S_M$, and in particular as $\gamma(s)\to (0,0,t_1)$ we have
$$
t(s)\to t_1\qquad \mbox{ and }\qquad\sin(\theta(s))\to -1
$$
(see also \eqref{arrivaa} in this respect). Hence
$$
\varphi_v(\xi_0)=\frac{c}{3}t_1+1.
$$
By the arbitrariness of $\xi_0 \in (M\cap \{t<t_2\})\smallsetminus\{(0,0,t_1)\}$ we have
$$
\varphi_v(\xi) = \frac{c}{3}t_1+1 \quad\mbox{ for all }\xi\in  (M\cap \{t<t_2\})\smallsetminus\{(0,0,t_1)\}.
$$
The two identities $\varphi_h\equiv 0$ and $\varphi_v \equiv \frac{c}{3}t_1+1$ can be rewritten as
$$
\left\langle \nu^H,\frac{\xi^H}{|\xi^H|}\right\rangle=\frac{c}{3}|\xi^H|^2\quad\mbox{ and }\quad \left\langle \eta,\frac{\xi^H}{|\xi^H|}\right\rangle=\frac{c}{3}\left(t-t_1-\frac{3}{c}\right),
$$
which implies
$$
1=\left\langle \nu^H,\frac{\xi^H}{|\xi^H|}\right\rangle^2+\left\langle \eta,\frac{\xi^H}{|\xi^H|}\right\rangle^2= \left(\frac{c}{3}|\xi^H|^2\right)^2+\left(\frac{c}{3}\left(t-t_1-\frac{3}{c}\right)\right)^2
$$
for any $\xi \in (M\cap \{t<t_2\})\smallsetminus\{(0,0,t_1)\}$. By the very definition of gauge sphere, this shows that
$$
(M\cap \{t<t_2\})\smallsetminus\{(0,0,t_1)\} \subset \de B_R(0,0,t_0)
$$
where
$$
R^2=\frac{3}{c}\qquad\mbox{ and }\qquad t_0=t_1+\frac{3}{c}.
$$
Being $M$ a smooth connected surface with no boundary and having the gauge sphere only two characteristic points at $(0,0,t_0-R^2)=(0,0,t_1)$ and $(0,0,t_0+R^2)=(0,0,t_1+\frac{6}{c})$, we can conclude that $M=\de B_R(0,0,t_0)$ as desired.  
\end{proof}

\subsection{The case of $\mathbb{H}^n$, $n\geq 2$}

Let us now turn the attention to the case $n\geq 2$ and to the proof of Theorem \ref{mainumb}. By pushing further the parallelism with the classical Euclidean framework, we can say that the higher dimensional analogue of the planar argument sketched in \eqref{twof} is effective if one requires the hypersurface to be (locally) umbilical: it provides in fact a proof of the classical characterization of umbilical surfaces also known as Darboux theorem \cite{D}  (see \cite{MoRos} for an expository text; see also \cite{MTsigma, GM} for different but related settings). In our Theorem \ref{mainumb} the main assumption is the umbilicality of $M$ with respect to Definition \ref{def: umbilical}. We warn the reader that in Definition \ref{def: umbilical} there is no information about the relationship between the two functions $l$ and $k$, and therefore a characterization is possible only under a prescription of the curvature (see in this respect \cite{CCHY18} for the case of constant $\sigma_k$-curvatures). Having this is mind, together with the fact that we are prescribing $H_M(\xi)=c|\xi^H|$, our aim is to provide a Darboux-type approach to Theorem \ref{mainumb}. We define
\begin{equation}\label{defphisn}
\begin{cases}
\varphi_h(\xi)=\frac{2n-1}{2n+1}H_M(\xi)|\xi^H|^2 - \la \nu^H,\xi^H\ra, \qquad\quad \varphi_h: M\smallsetminus S_M \to \R,\\
\varphi_v(\xi)=\frac{2n-1}{2n+1}H_M(\xi) \frac{t}{|\xi^H|} - \la \eta,\frac{\xi^H}{|\xi^H|}\ra\qquad\quad\,\,\,\,\,\,\,\, \varphi_v: M\smallsetminus \left(S_M\cup L_v\right) \to \R,
\end{cases}
\end{equation}
where we have kept the notation 
$$L_v=\{(0,0,t)\in \Hn\,:\, t\in\R\}$$ 
to denote the $t$-axis. With the following lemma we realize that the constancy of the key function $\varphi_h$ along $\eta$ is tied to the vanishing of $l-3k$ (in Example \ref{exsphere} we saw that for the gauge spheres $l=3k$ by a direct computation).

\begin{lemma}\label{lem1n}
Fix $n\geq 2$. Let $M$ be a smooth hypersurface in $\H^n$ which is connected and orientable. Let also $\omega$ be a relatively open set contained in $M\smallsetminus S_M$. Suppose there exists $c\in\R$ such that $H_M(\xi)=c|\xi^H|$ for $\xi\in\omega $. If $M$ is umbilic then we have
$$
\eta(\varphi_h)=\frac{2n-2}{2n+1}\left\langle \eta,\xi^H\right\rangle \left( 3k-l\right)\qquad\quad \mbox{ in }\omega.
$$
\end{lemma}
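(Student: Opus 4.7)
The plan is to compute $\eta(\varphi_h)$ directly by expanding the definition in \eqref{defphisn}, treating the three building blocks $\eta(H_M)$, $\eta(|\xi^H|^2)$, and $\eta(\langle\nu^H,\xi^H\rangle)$ separately, and then collecting terms using the umbilicality relation from Definition \ref{def: umbilical} together with the curvature identity $H_M=\frac{l+(2n-2)k}{2n-1}$.

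The first two pieces are essentially routine. By \eqref{eq cov der zeta xih}, since $\xi^H\perp T$, one finds $\eta(|\xi^H|^2)=2\langle \eta,\xi^H\rangle$. Combining with the hypothesis $H_M=c|\xi^H|$ yields
\[
\eta(H_M)|\xi^H|^2+H_M\eta(|\xi^H|^2)\,=\,3H_M\langle\eta,\xi^H\rangle.
\]
The genuinely geometric step is the evaluation of $\eta(\langle \nu^H,\xi^H\rangle)=\langle \nabla_\eta\nu^H,\xi^H\rangle+\langle \nu^H,\nabla_\eta\xi^H\rangle$. Using \eqref{eq cov der zeta xih} and the orthogonality $\nu^H\perp\eta$, $\nu^H\perp T$, the second summand vanishes, so only $\langle \nabla_\eta\nu^H,\xi^H\rangle$ survives. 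Since $\xi^H$ is horizontal, this equals $\langle \mathcal{P}_H(\nabla_\eta\nu^H),\xi^H\rangle$.

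Here Definition \ref{defshape} comes in: applied with $Z=\eta$, and using $J\eta=\nu^H$ (from \eqref{defeta} and $J^2=-I$ on $\mathcal{H}$) together with $\langle\eta,\eta\rangle=1$, the correction term $\tfrac{2\langle\nu,T\rangle}{|\mathcal{P}_H(\nu)|}(J(\eta)-\langle\eta,\eta\rangle\nu^H)$ vanishes identically. Hence $\mathcal{P}_H(\nabla_\eta\nu^H)=A_M(\eta)$. The umbilicality assumption then gives $A_M(\eta)=(l-k)\eta+k\eta=l\,\eta$, and consequently
\[
\eta(\langle \nu^H,\xi^H\rangle)=l\,\langle \eta,\xi^H\rangle.
\]

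Putting it all together,
\[
\eta(\varphi_h)=\frac{2n-1}{2n+1}\cdot 3H_M\langle\eta,\xi^H\rangle-l\langle\eta,\xi^H\rangle=\langle\eta,\xi^H\rangle\Bigl(\frac{3(2n-1)}{2n+1}H_M-l\Bigr).
\]
Substituting $(2n-1)H_M=l+(2n-2)k$ from \eqref{umbcurv} converts the bracket into $\tfrac{(2n-2)(3k-l)}{2n+1}$, which is the claimed identity. The main obstacle I anticipate is making sure the shape-operator computation is clean: verifying that the skew correction in Definition \ref{defshape} disappears precisely for $Z=\eta$ is what allows the umbilicality hypothesis to produce the tidy eigenvalue $l$ and, ultimately, the combination $3k-l$ that encodes the ``gauge sphere condition'' of Remark \ref{rmk: umbilic cr}.
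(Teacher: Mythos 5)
Your proposal is correct and follows essentially the same route as the paper: the heart of both arguments is the identity $\langle\nabla_\eta\nu^H,\xi^H\rangle=l\,\langle\eta,\xi^H\rangle$ (which the paper records as \eqref{umbder} and you justify in slightly more detail by noting that the skew correction in Definition \ref{defshape} vanishes at $Z=\eta$), combined with $\eta(|\xi^H|^2)=2\langle\eta,\xi^H\rangle$ from \eqref{eq cov der zeta xih} and the trace identity $(2n-1)H_M=l+(2n-2)k$. All steps check out.
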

\begin{proof}
The umbilicality condition in Definition \ref{def: umbilical} implies that
\begin{equation}\label{umbder}
\la \nabla_\eta \nu^H,\xi^H\ra= l(\xi) \la \eta,\xi^H\ra.
\end{equation}
For $\xi\in\omega$ we can then exploit the assumption $H_M(\xi)=c|\xi^H|$, together with \eqref{umbder} and \eqref{eq cov der zeta xih}, to deduce that
\begin{align*}
\eta(\varphi_h)&=\eta\left(c\frac{2n-1}{2n+1}|\xi^H|^3-\la \nu^H,\xi^H\ra\right)=3c\frac{2n-1}{2n+1}|\xi^H|^2\frac{\left\langle \eta,\xi^H\right\rangle}{|\xi^H|} - l(\xi) \left\langle \eta,\xi^H\right\rangle \\
&=\left\langle \eta,\xi^H\right\rangle \left( \frac{3(2n-1)}{2n+1} H_M(\xi) -  l(\xi) \right).
\end{align*}
Keeping in mind Definition \ref{def: umbilical}, we obtain
$$\eta(\varphi_h)=\left\langle \eta,\xi^H\right\rangle \left( \frac{3(2n-2)}{2n+1} k(\xi) + \frac{2-2n}{2n+1}l(\xi) \right)=\frac{2n-2}{2n+1}\left\langle \eta,\xi^H\right\rangle \left( 3k(\xi)-l(\xi)\right)$$
as desired.
\end{proof}

We now show that in fact $\varphi_h\equiv 0$ and $l\equiv 3k$. There are two main tools in the proof: the use of the Codazzi equations found in \cite{CCHY18}, and the analysis of the global behaviour of the auxiliary function $|\xi^H|^{2n-2}\varphi_h(\xi)$ (a weighted version of $\varphi_h$). 

\begin{lemma}\label{lem2n}
Fix $n\geq 2$. Let $M$ be a smooth hypersurface in $\H^n$ which is connected and orientable. Assume that $M$ is umbilic, and suppose that there exists $c\neq 0$ such that $H_M(\xi)=c|\xi^H|$ for every point $\xi\in M\smallsetminus S_M$. Then, for all $\xi\in M\smallsetminus S_M$, we have
\begin{equation}\label{3props}
\begin{cases}
\left\langle \nu^H,\xi^H\right\rangle= |\xi^H|^2 k(\xi), \\
\left\langle \eta,\xi^H\right\rangle= 2|\xi^H|^2 \frac{\left\langle \nu_\xi, T\right\rangle}{|\mathcal{P}_H(\nu_\xi)|},\\
\left\langle V_j,\xi^H\right\rangle = \left\langle W_j,\xi^H\right\rangle=0\quad\mbox{ for }j\in\{1,\ldots,n-1\}.
\end{cases}
\end{equation}
If in addition $M$ is compact and without boundary, we have that $c>0$, $S_M = M\cap L_v\not =\emptyset$, and
\begin{equation}\label{2props}
\varphi_h\equiv 0 \equiv l- 3k.
\end{equation}
\end{lemma}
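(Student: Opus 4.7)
The starting point is an algebraic identity that reduces \eqref{2props} to a single assertion once \eqref{3props} is known: plugging $H_M=(l+(2n-2)k)/(2n-1)$ into \eqref{defphisn} and using the first identity $\langle \nu^H,\xi^H\rangle=|\xi^H|^2k$ of \eqref{3props}, one computes
\[
\varphi_h=\frac{|\xi^H|^2(l-3k)}{2n+1},
\]
so that $\varphi_h\equiv 0\Leftrightarrow l\equiv 3k$. Accordingly, the proof is organized in two stages: the pointwise derivation of \eqref{3props}, and then a compactness-based argument for $c>0$ and $\varphi_h\equiv 0$.

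For \eqref{3props}, the idea is to play the umbilical structure against the non-constant prescription $H_M(\xi)=c|\xi^H|$. On the one hand, \eqref{eq cov der zeta xih} yields $V_j(H_M)=c\,\langle V_j,\xi^H\rangle/|\xi^H|$ and analogously for $W_j$. On the other hand, the Codazzi equations for the horizontal shape operator of an umbilic hypersurface derived in \cite{CCHY18} give independent expressions for $V_j(k),V_j(l),W_j(k),W_j(l)$ in terms of $\langle \nu,T\rangle/|\mathcal{P}_H(\nu)|$ and of the horizontal components $\langle V_j,\xi^H\rangle,\langle W_j,\xi^H\rangle$ of $\xi^H$. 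Matching the two via $H_M=(l+(2n-2)k)/(2n-1)$ yields an over-determined linear system in $(\langle V_j,\xi^H\rangle,\langle W_j,\xi^H\rangle)$ whose only solution is the trivial one, which is the third identity of \eqref{3props}. Feeding this vanishing back into the umbilical relations $\nabla_{V_j}\nu^H=kV_j+\frac{2\langle\nu,T\rangle}{|\mathcal{P}_H(\nu)|}W_j$ and $A_M(\eta)=l\eta$, and testing against $\xi^H$, delivers the first and second identities.

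With \eqref{3props} in hand, $c>0$ follows by a maximum principle argument parallel to Step I of Lemma \ref{lem2}. At a maximizer $\xi_1\in M\smallsetminus L_v$ of $|\xi^H|^2$, the vanishing of the tangential gradient combined with \eqref{3props} forces $\langle \nu,T\rangle(\xi_1)=0$, $\nu^H(\xi_1)=\xi_1^H/|\xi_1^H|$ and $k(\xi_1)=1/|\xi_1^H|$; the second-order condition $\eta^2(|\xi^H|^2)(\xi_1)\leq 0$ together with the umbilical identity $\nabla_\eta\eta=-l\,\nu^H$ then gives $l(\xi_1)|\xi_1^H|\geq 1$, which combined with the trace identity $(2n-1)c|\xi_1^H|=l(\xi_1)+(2n-2)k(\xi_1)$ yields $c\geq 1/|\xi_1^H|^2>0$. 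The inclusion $S_M\subseteq M\cap L_v$ is obtained from the second identity of \eqref{3props} by letting $|\mathcal{P}_H(\nu)|\to 0$ (the left-hand side is controlled by $|\xi^H|$, forcing $|\xi^H|\to 0$), and $S_M\neq\emptyset$ is guaranteed by compactness.

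For $\varphi_h\equiv 0$, we study the weighted function $u:=|\xi^H|^{2n-2}\varphi_h=|\xi^H|^{2n}(l-3k)/(2n+1)$. Combining Lemma \ref{lem1n} with $\eta(|\xi^H|^{2n-2})=(2n-2)|\xi^H|^{2n-4}\langle \eta,\xi^H\rangle$ and the algebraic identity above gives $\eta(u)\equiv 0$, so $u$ is a first integral for the flow of $\eta$. A higher-dimensional adaptation of Steps II--III of Lemma \ref{lem2} then shows that every integral curve of $\eta$ starting at a non-characteristic point must accumulate on $S_M\subset L_v$ (otherwise the $t$-component would be unbounded, contradicting compactness), and along such a curve $|\xi^H|^{2n}\to 0$. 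Since $|\xi^H|^2 k=\langle\nu^H,\xi^H\rangle$ is bounded by $|\xi^H|$ via Cauchy--Schwarz, one has $|k|\le 1/|\xi^H|$; coupled with $l=(2n-1)c|\xi^H|-(2n-2)k$, this yields $|l-3k|=O(1/|\xi^H|)$, hence $|u|=O(|\xi^H|^{2n-1})\to 0$ at the orbit endpoint (note $2n-1\geq 3$ for $n\geq 2$). Thus the orbit-constant $u$ must be $0$, which is exactly $\varphi_h\equiv 0\equiv l-3k$. The main obstacle is the Codazzi-based derivation of \eqref{3props}: the non-constancy of $H_M=c|\xi^H|$ breaks the Pansu-sphere analysis of \cite{CCHY18} (which treats constant $H_M$ in the case $l=2k$), and one must carefully isolate $\langle V_j,\xi^H\rangle$ and $\langle W_j,\xi^H\rangle$ as the only free parameters in the resulting over-determined system.
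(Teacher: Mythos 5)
Your overall architecture matches the paper's: derive \eqref{3props} from the Codazzi equations of \cite{CCHY18} plus the prescription $H_M=c|\xi^H|$, get $c>0$ at a maximizer of $|\xi^H|^2$, and kill the orbit-constant $u=|\xi^H|^{2n-2}\varphi_h$ by the unbounded-$t$/winding contradiction. The second half of your plan (the $c>0$ step, the $\phi_0<0$ versus $\phi_0>0$ dichotomy, and the $O(|\xi^H|^{2n-1})$ decay of $u$ near $L_v$) is sound and essentially reproduces Steps II--V of the paper's proof, modulo the fact that you work orbit by orbit while the paper first shows $\phi$ is constant on all of $M\smallsetminus S_M$; that reorganization is harmless.

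The genuine gap is in the derivation of the first two identities of \eqref{3props}. First, a factual point: \cite[Proposition 4.2]{CCHY18} does not express $V_j(k),V_j(l)$ in terms of $\la V_j,\xi^H\ra$ (the Codazzi equations are intrinsic and know nothing about the position vector); it says $V_j(k)=W_j(k)=V_j(l)=W_j(l)=0$ outright, and the third identity then follows in one line from $0=V_j(H_M)=c\,V_j(|\xi^H|)$. More seriously, "feeding this back into the umbilical relations and testing against $\xi^H$" does not deliver the first two identities: testing $A_M(\eta)=l\eta$ against $\xi^H$ only gives the \emph{differential} relation $\eta(\la\nu^H,\xi^H\ra)=l\la\eta,\xi^H\ra$, and testing $\nabla_{V_j}\nu^H=kV_j+\alpha W_j$ against $\xi^H$ gives $0=0$ once the third identity is known. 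Two further ingredients are indispensable and absent from your plan: (i) the commutator structure of $\mathcal{H}^0=\mathrm{span}\{V_j,W_j\}$ (Lemma \ref{commutator}), which upgrades $\la V_j,\xi^H\ra=\la W_j,\xi^H\ra=0$ to the pointwise algebraic relation $\alpha\la\nu^H,\xi^H\ra=k\la\eta,\xi^H\ra$ with $\alpha=2\la\nu,T\ra/|\mathcal{P}_H(\nu)|$; and (ii) the $\eta$-direction Codazzi equations $\eta(k)=(l-2k)\alpha$ and $\eta(\alpha)=k^2-\alpha^2-kl$, which upon differentiating that relation along $\eta$ yield $(k^2+\alpha^2)\la\nu^H,\xi^H\ra=k$, whence $k^2+\alpha^2=|\xi^H|^{-2}$ and the two identities (plus a continuity argument where $k=0$, using $k^2+\alpha^2>0$). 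Without this, \eqref{3props} is unproved, and since the rewriting $\varphi_h=|\xi^H|^2(l-3k)/(2n+1)$ and the identity $\la\nu^H,\xi^H\ra^2+\la\eta,\xi^H\ra^2=|\xi^H|^2$ underpinning the $\theta(s)$ analysis both rest on it, the gap propagates to \eqref{2props}. A smaller omission: you prove $S_M\subseteq M\cap L_v$ but not the reverse inclusion (which the paper gets from finiteness of $k^2+\alpha^2=|\xi^H|^{-2}$ at non-characteristic points), and "$S_M\neq\emptyset$ by compactness" is not a valid general principle — here it must come from the dynamics ($t'(s)=-\tfrac{2c(2n-1)}{2n+1}r^3(s)$ would force $t$ unbounded if $M\cap L_v=\emptyset$), as you in effect use later.
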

\begin{proof}
Let us divide the proof in multiple steps.\\
\noindent{\it Step I. } We first show the validity of \eqref{3props}. Let
$$
\alpha=\frac{2\left\langle \nu, T\right\rangle}{|\mathcal{P}_H\nu|}.
$$
By \cite[Proposition 4.2]{CCHY18} we know that
\begin{equation}\label{er}
V_j(k)=W_j(k)=0=V_j(l)=W_j(l)\quad\mbox{ for }j\in\{1,\ldots,n-1\}
\end{equation}
and
\begin{equation}\label{etakalpha}
\eta(k)=(l-2k)\alpha,\qquad \eta(\alpha)=k^2-\alpha^2-kl.
\end{equation}
For $\xi\in M\smallsetminus \left(S_M\cup L_v\right)$, from \eqref{er} and the identity $(2n-1)H_M=(2n-2)k + l$ we obtain
$$
\frac{\left\langle V_j,\xi^H\right\rangle}{|\xi^H|}=V_j(|\xi^H|)=\frac{1}{c}V_j(H_M(\xi))=\frac{2n-2}{c(2n-1)}V_j(k)+\frac{1}{c(2n-1)}V_j(l)=0
$$
for $j\in\{1,\ldots,n-1\}$. The same holds for $\left\langle W_j, \xi^H\right\rangle$. This shows that
\begin{equation}\label{VWxi}
\left\langle V_j,\xi^H\right\rangle = \left\langle W_j,\xi^H\right\rangle=0\quad\mbox{ for }j\in\{1,\ldots,n-1\}\mbox{ and }\xi\in M\smallsetminus S_M.
\end{equation}
We then deduce that the function $|\xi^H|$ is constant even along the commutators of the vector fields in ${\rm{span}}\left\{V_1,W_1,\ldots,V_{n-1},W_{n-1}\right\}$. Exploiting Lemma \ref{commutator} this implies 
$$\left\langle \tau, \xi^H\right\rangle=\frac{k(\xi)|\mathcal{P}_H(\nu)|}{2}\left\langle \eta,\xi^H\right\rangle.$$
Recalling that $\tau=\left\langle\nu, T \right\rangle \nu^H - |\mathcal{P}_H(\nu)| T$ and using $\left\langle T,\xi^H\right\rangle=0$ we infer
\begin{equation}\label{alphaperkappa}
\alpha(\xi) \left\langle \nu^H, \xi^H\right\rangle = k(\xi) \left\langle \eta, \xi^H\right\rangle \quad\mbox{ for }\xi\in M\smallsetminus S_M.
\end{equation}
From \eqref{eq cov der zeta xih}, \eqref{eq: relations J}, and the umbilicality condition in Definition \ref{def: umbilical}, we can compute
\begin{align}\label{1menol}
&\eta( \left\langle \eta, \xi^H\right\rangle )=1+ \left\langle \nabla_\eta \eta, \xi^H\right\rangle=1+ \left\langle \nabla_\eta J\eta, J\xi^H\right\rangle =1+ \left\langle \nabla_\eta \nu^H, J\xi^H\right\rangle\notag\\
&=1 + l(\xi)\left\langle \eta , J\xi^H\right\rangle=1- l(\xi)\left\langle \nu^H , \xi^H\right\rangle.
\end{align}
If we now differentiate the identity \eqref{alphaperkappa} along $\eta$, the relations \eqref{umbder}, \eqref{etakalpha}, and \eqref{1menol} yield
\begin{align*}
0&= \eta(\alpha)\left\langle \nu^H, \xi^H\right\rangle + \alpha \eta\left(\left\langle \nu^H, \xi^H\right\rangle\right) - \eta(k) \left\langle \eta, \xi^H\right\rangle - k \eta\left(\left\langle \eta, \xi^H\right\rangle\right)\\
&=(k^2-\alpha^2-kl)\left\langle \nu^H, \xi^H\right\rangle + \alpha l \left\langle \eta, \xi^H\right\rangle + (2k-l)\alpha \left\langle \eta, \xi^H\right\rangle + k l\left\langle \nu^H , \xi^H\right\rangle - k\\
&=(k^2-\alpha^2)\left\langle \nu^H, \xi^H\right\rangle + 2k\alpha \left\langle \eta, \xi^H\right\rangle - k\\
&=(k^2+\alpha^2)\left\langle \nu^H, \xi^H\right\rangle -k + 2\alpha (k \left\langle \eta, \xi^H\right\rangle - \alpha \left\langle \nu^H, \xi^H\right\rangle).
\end{align*}
Keeping in mind \eqref{alphaperkappa}, this says that
\begin{equation}\label{nuetakalpha}
\left\langle \nu^H, \xi^H\right\rangle = \frac{k(\xi)}{k^2(\xi)+\alpha^2(\xi)} \quad\mbox{ and }\quad \left\langle \eta, \xi^H\right\rangle = \frac{\alpha(\xi)}{k^2(\xi)+\alpha^2(\xi)}
\end{equation}
at least for any $\xi \in M\smallsetminus S_M$ where $k(\xi)\neq 0$. Notice that in our assumptions we have $k^2+\alpha^2>0$ in $M\smallsetminus S_M$ (see \cite[part (a) in Theorem B]{CCHY16}, and keep in mind that $\alpha\not\equiv 0$ due to the boundedness of $M$). Let also notice that, if $k$ vanishes at a point $\bar{\xi}\in M\smallsetminus (S_M\cup L_v)$, then $\eta(k)(\bar{\xi})=l(\bar{\xi})\alpha(\bar{\xi})=(2n-1)c|\bar{\xi}^H|\alpha(\bar{\xi})\neq 0$. Hence the relations \eqref{nuetakalpha} hold true by continuity throughout $M\smallsetminus S_M$. This implies that
\begin{align}\label{relakn}
&\frac{1}{k^2(\xi)+\alpha^2(\xi)}=\left\langle \nu^H, \xi^H\right\rangle^2 + \left\langle \eta, \xi^H\right\rangle^2\\
&=\left\langle \nu^H, \xi^H\right\rangle^2 + \left\langle \eta, \xi^H\right\rangle^2 + \sum_{j=1}^{n-1}\left\langle V_j, \xi^H\right\rangle^2 + \left\langle W_j, \xi^H\right\rangle^2=|\xi^H|^2,\notag
\end{align}
where in the second equality we used \eqref{VWxi}. Inserting the last identity in \eqref{nuetakalpha}, we get
\begin{equation}\label{nuetakalpha2}
\left\langle \nu^H, \xi^H\right\rangle = |\xi^H|^2 k(\xi) \quad\mbox{ and }\quad \left\langle \eta, \xi^H\right\rangle = |\xi^H|^2 \alpha(\xi)\qquad\mbox{ for }\xi\in M\smallsetminus S_M.
\end{equation}
The combination of \eqref{VWxi} and \eqref{nuetakalpha2} completes the proof of \eqref{3props}. In particular, since the function $\alpha^2\to\infty$ only at characteristic points, from \eqref{relakn} we can also deduce that
\begin{equation}\label{uguali}
S_M = M\cap L_v.
\end{equation}
As a matter of fact, the inclusion $M\cap L_v\subseteq S_M$ follows from the fact that at non-characteristic points $|\xi^H|^{-2}=k^2(\xi)+\alpha^2(\xi)$ is finite whereas the inclusion $S_M\subseteq M\cap L_v$ is a consequence of the boundedness of $\alpha^2(\xi)\leq k^2(\xi)+\alpha^2(\xi)=|\xi^H|^{-2}$ outside of $L_v$.\\
\noindent{\it Step II. } We now show that
$$\xi\mapsto \phi(\xi):=|\xi^H|^{2n-2}\varphi_h(\xi) \quad\mbox{ is constant throughout } M\smallsetminus S_M.$$
To this aim, using \eqref{nuetakalpha2} we can rewrite the function $\varphi_h$ in the following way
\begin{align}\label{phihl3k}
\varphi_h(\xi)&=\frac{2n-1}{2n+1}H_M(\xi)|\xi^H|^2 - k(\xi) |\xi^H|^2=|\xi^H|^2\left( \frac{(2n-2)k(\xi) + l(\xi) }{2n+1} - k(\xi) \right)\notag\\
&=\frac{l(\xi)-3k(\xi)}{2n+1}|\xi^H|^2,
\end{align}
so that
$$
\phi(\xi)=\frac{l(\xi)-3k(\xi)}{2n+1}|\xi^H|^{2n}.
$$
It is clear from \eqref{er} and \eqref{VWxi} that
$$
V_j(\phi)=W_j(\phi)=0 \qquad \mbox{ for all }j\in\{1,\ldots,n-1\},
$$
which also implies by Lemma \ref{commutator} that
$$\tau(\phi)-\frac{k|\mathcal{P}_H(\nu)|}{2}\,\eta(\phi)=0.$$
On the other hand, by using \eqref{phihl3k} and Lemma \ref{lem1n} we obtain
\begin{align*}
&\eta(\phi)=\eta\left( |\xi^H|^{2n-2}\varphi_h\right)\\
&=(2n-2)|\xi^H|^{2n-4}\left\langle \eta,\xi^H\right\rangle \varphi_h + |\xi^H|^{2n-2}\eta(\varphi_h)\\
&=\frac{2n-2}{2n+1}|\xi^H|^{2n-2}\left\langle \eta,\xi^H\right\rangle (l-3k) + \frac{2n-2}{2n+1} |\xi^H|^{2n-2}\left\langle \eta,\xi^H\right\rangle \left( 3k-l\right) =0.
\end{align*}
This says that the function $\phi$ is constant along every tangent vector fields in $M\smallsetminus S_M$ and concludes the proof of the current step.\\
\noindent{\it Step III. } From now on we shall assume that $M$ is also compact and without boundary. Let us show that
$$
c>0.
$$
We argue similarly to the proof of Step I in Lemma \ref{lem2}. By the compactness of $M$ the function $\frac{1}{2}|\xi^H|^2$ attains its maximum at a point $\xi_1\in M\smallsetminus L_v$, and we know from \eqref{uguali} that $\xi_1 \notin S_M$. Then, at $\xi=\xi_1$ we have
$$
0=\eta\left(\frac{1}{2}|\xi^H|^2\right)=\left\langle \eta, \xi_1^H\right\rangle\quad\mbox{and}\quad 0=\tau\left(\frac{1}{2}|\xi^H|^2\right)=\left\langle \nu,T\right\rangle \left\langle \nu^H, \xi_1^H\right\rangle.
$$ 
Since by \eqref{VWxi} we have 
$$\la \nu^H,\xi_1^H\ra^2=\la \eta,\xi_1^H\ra^2+\la \nu^H,\xi_1^H\ra^2 + \sum_{j=1}^n\la V_j,\xi_1^H\ra^2 + \la W_j,\xi_1^H\ra^2 = |\xi_1^H|^2>0,$$
we deduce that $\left\langle \nu,T\right\rangle=0$ and therefore
$$
k(\xi_1)=\frac{1}{|\xi^H_1|^2}\left\langle \nu^H, \xi_1^H\right\rangle = \frac{1}{|\xi^H_1|^2\,|\mathcal{P}_H \nu|} \left\langle \nu, \xi_1 \right\rangle >0
$$
where the positive sign is a consequence of the maximality condition and the fact that $\nu$ is the outward normal. Moreover, at the maximum point $\xi=\xi_1$ we obtain from \eqref{1menol}
$$
1-l(\xi_1) \left\langle \nu^H, \xi_1^H\right\rangle= \eta^2\left(\frac{1}{2}|\xi^H|^2\right)\leq 0
$$
which says that
$$
l(\xi_1)\geq \frac{1}{\left\langle \nu^H, \xi_1^H\right\rangle}>0.
$$
Therefore, keeping in mind Definition \ref{def: umbilical}, we have
$$
c=\frac{l(\xi_1)}{(2n-1)|\xi^H_1|}+\frac{(2n-2)k(\xi_1)}{(2n-1)|\xi^H_1|}>0.
$$
\noindent{\it Step IV. } We now show that
$$\phi(\xi)=|\xi^H|^{2n-2}\varphi_h(\xi)\equiv 0 \quad\mbox{ for }\xi\in M\smallsetminus S_M.$$
We already know from {\it Step II} that $\phi$ is identically equal to a constant value $\phi_0$. By contradiction we shall assume that $\phi_0\neq 0$. Since from the definition of $\varphi_h$ in \eqref{defphisn} it is clear that $\varphi_h$ and $\phi$ tend to $0$ as $\xi$ approaches $M\cap L_v$ and we know from \eqref{uguali} that $M\cap L_v=S_M$, we have that $S_M=M\cap L_v=\emptyset$ (so that there exist $0<r_m\leq r_M<\infty$ satisfying $r_m\leq |\xi^H|\leq r_M$). If we consider the integral curve $\gamma$ of $\eta$ starting from any point $\xi_0 \in M$, we can then extend $\gamma$ indefinitely. Arguing similarly to the proof of Step II in Lemma \ref{lem2} (from which we also borrow the analogous notations for the smooth functions $t(s)$, $r(s)$, and $\theta(s)$) we want to infer that the assumption $\phi_0\neq 0$ leads to the unboundedness of $\gamma(s)$ (which contradicts the compactness of $M$). We can rewrite 
$$
\frac{c(2n-1)}{(2n+1)}r^{2n+1}(s)-r^{2n-1}(s)\cos{\left(\theta(s)\right)}=\phi_0,
$$
which implies in particular that along the curve $\gamma$ one has $r(s)=R(\cos(\theta(s)))$ (i.e. the positive function $r(s)$ is uniquely determined by the value $\cos{\left(\theta(s)\right)}$). From \eqref{tderivative} we infer that
\begin{equation}\label{tprimen}
t'(s)=-2r(s)\cos{\left(\theta(s)\right)}=2\phi_0 r^{2-2n}(s)-\frac{2c(2n-1)}{(2n+1)}r^{3}(s).
\end{equation}
Since $c>0$ by {\it Step III} and $r(s)$ is bounded, if $\phi_0<0$ then $t'(s)\leq 2\phi_0 r_{M}^{2-2n}<0$ and $t(s)$ would be forced to be unbounded. We can then assume $\phi_0>0$. Exploiting \eqref{umbder}, \eqref{1menol}, and \eqref{VWxi} we recognize that
\begin{align*}
&\eta\left(\arctan\left(\frac{\left\langle \eta,\xi^H\right\rangle}{\left\langle \nu^H,\xi^H\right\rangle}\right)\right)=\frac{(1-l(\xi)\left\langle \nu^H,\xi^H\right\rangle)\left\langle \nu^H,\xi^H\right\rangle - l(\xi) \left\langle \eta,\xi^H\right\rangle^2 }{\left\langle \eta,\xi^H\right\rangle^2+\left\langle \nu^H,\xi^H\right\rangle^2}\\
&=\frac{\left\langle \nu^H,\xi^H\right\rangle-l(\xi) |\xi^H|^2}{|\xi^H|^2}= \frac{\left\langle \nu^H,\xi^H\right\rangle-(2n+1)\varphi_h(\xi)-3k(\xi)|\xi^H|^2}{|\xi^H|^2}\\
&=\frac{-2\left\langle \nu^H,\xi^H\right\rangle-(2n+1)\varphi_h(\xi)}{|\xi^H|^2},
\end{align*}
where in the last two equalities we have made use of \eqref{phihl3k} and \eqref{nuetakalpha2}. The previous identity yields
\begin{align}\label{thetaprimen}
\theta'(s)&=\frac{2n-1}{r^2(s)}\left(r(s)\cos{\left(\theta(s)\right)}-c r^3(s)\right)\\
&=\frac{2n-1}{r^2(s)}\left(-\phi_0 r^{2-2n}(s) -\frac{2c}{2n+1} r^3(s)\right).\notag
\end{align}
Since we know that $c>0$, the assumption $\phi_0>0$ implies that $\theta(s)$ is strictly decreasing and $\theta(s)\to -\infty$ as $s\to \infty$. We can then pick a strictly increasing sequence of values $\{s_k\}_{k\in\N}$ such that $\theta(s_k)-\theta(s_{k+1})=2\pi$ for all $k\in\N$. From \eqref{tprimen} and \eqref{thetaprimen} we get
\begin{align*}
&t(s_{k+1})-t(s_k)=\int_{s_k}^{s_{k+1}} t'(s) ds \\
&= -2 \int_{s_k}^{s_{k+1}} r(s)\cos(\theta(s)) ds=\frac{2}{2n-1} \int_{s_k}^{s_{k+1}} \frac{r^3(s)\cos(\theta(s))}{\phi_0 r^{2-2n}(s) +\frac{2c}{2n+1} r^3(s)} \theta'(s) ds\\
&= \frac{2}{(2n-1)c} \int_{s_k}^{s_{k+1}} \frac{(cr^3(s) - r(s)\cos(\theta(s)) + r(s)\cos(\theta(s)))\cos(\theta(s))}{\phi_0 r^{2-2n}(s) +\frac{2c}{2n+1} r^3(s)} \theta'(s) ds\\
&=  \frac{2}{(2n-1)c} \int_{s_k}^{s_{k+1}}\cos(\theta(s))\theta'(s) ds + \frac{2}{(2n-1)c} \int_{s_{k}}^{s_{k+1}}  \frac{r(s)\cos^2(\theta(s))}{\phi_0 r^{2-2n}(s) +\frac{2c}{2n+1} r^3(s)} \theta'(s) ds\\
&=-\frac{2}{(2n-1)c} \int_{\theta(s_{k+1})}^{\theta(s_{k})}  \frac{R(\cos(\sigma))\cos^2(\sigma)}{\phi_0 R^{2-2n}(\cos(\sigma)) +\frac{2c}{2n+1} R^3(\cos(\sigma))} d\sigma\\
&=-\frac{2}{(2n-1)c} \int_{\theta(s_{1})-2\pi}^{\theta(s_{1})}   \frac{R(\cos(\sigma))\cos^2(\sigma)}{\phi_0 R^{2-2n}(\cos(\sigma)) +\frac{2c}{2n+1} R^3(\cos(\sigma))} d\sigma
\end{align*}
for every $k\in\N$. Since the term $t(s_{k+1})-t(s_k)$ is strictly negative and independent of $k$, we conclude as in Lemma \ref{lem2} that $t(s_k)\to - \infty$ as $k\to \infty$. Hence we have reached a contradiction in both scenarios $\phi_0<0$ and $\phi_0>0$. This ensures the validity of $\phi\equiv 0$.\\
\noindent{\it Step V. } In this final step we finish the proof of the desired statements. Keeping the same notations as before, if we insert the information $\phi\equiv 0$ proved in {\it Step IV} in \eqref{tprimen} we infer
\begin{equation}\label{tprimenn}
t'(s)=-\frac{2c(2n-1)}{(2n+1)}r^{3}(s)\leq 0.
\end{equation}
If the sets $S_M$ and $M\cap L_v$ were empty, we could extend the integral curves $\gamma$ of $\eta$ indefinitely and we would have the contradicting property $t(s)\to -\infty$ as $s\to \infty$. Therefore, by \eqref{uguali}, it has to be
\begin{equation}\label{ugualinonvuoti}
S_M=M\cap L_v\not =\emptyset.
\end{equation}
Finally, exploiting again \eqref{uguali} and the identity $\phi\equiv 0$ in $M\smallsetminus S_M$, we have
$$
\varphi_h\equiv 0 \qquad\mbox{ in } M \smallsetminus S_M
$$
and, by \eqref{phihl3k}, also
$$
l-3k \equiv 0 \qquad\mbox{ in } M \smallsetminus S_M.
$$
This concludes the proof of \eqref{2props}, and of the lemma.
\end{proof}

We are finally ready to complete the proof of Theorem \ref{mainumb}.

\begin{proof}[Proof of Theorem \ref{mainumb}]
With Lemma \ref{lem1n} and Lemma \ref{lem2n} in hand, in order to complete the proof we can follow closely the arguments of Theorem \ref{main}. In fact, we deduce from \eqref{ugualinonvuoti} that there exists $p\in\N$ such that
$$
S_M=\{(0,0,t_1),\ldots,(0,0,t_p)\}
$$ 
for some $t_1<t_2<\ldots<t_p$ (we stress that points in $S_M=M\cap L_v$ cannot accumulate since $T$ is aligned with the normal direction at characteristic points). If we consider any point $\xi_0 \in M\cap \{t<t_2\}$ such that $\xi_0\not =(0,0,t_1)$, we can look at the integral curve $\gamma$ of $\eta$ starting from $\xi_0$. With the same notations as in Lemma \ref{lem2n}, we know from \eqref{tprimenn} that $\gamma \subset M\cap \{t<t_2\}$ and 
$$\gamma(s) \mbox{ reaches }(0,0,t_1).$$ 
As $r(s)>0$ and it is reaching $0$, we obtain from the identity $\varphi_h\equiv 0$ proved in Lemma \eqref{lem2} that
$$
\cos(\theta(s))=\frac{(2n-1)c}{2n+1} r^2(s)>0\qquad\mbox{ and }\qquad \cos(\theta(s)) \to 0.
$$
Recalling \eqref{thetaprimen}, this implies that $\sin(\theta(s))$ is decreasing and
$$
\sin(\theta(s))\to -1.
$$
This yields
\begin{equation}\label{limphiv}
\frac{(2n-1)c}{2n+1}t(s)-\sin(\theta(s)) \to \frac{(2n-1)c}{2n+1}t_1+1\quad\mbox{ as $\gamma(s)$ approaches }(0,0,t_1).
\end{equation}
On the other hand, if we differentiate along $\eta$ the function
$$
\varphi_v(\xi)=\frac{2n-1}{2n+1}H_M(\xi) \frac{t}{|\xi^H|} - \frac{\la \eta,\xi^H\ra}{|\xi^H|}= \frac{(2n-1)c}{2n+1}t - \frac{\la \eta,\xi^H\ra}{|\xi^H|},
$$
for $\xi\in M\smallsetminus S_M$, by using \eqref{tderivative} and \eqref{1menol}, we obtain
\begin{align*}
&\eta\left(\varphi_v\right)(\xi)=-\frac{2(2n-1)c}{2n+1}\left\langle \nu^H,\xi^H\right\rangle - \frac{1-l\left\langle \nu^H,\xi^H\right\rangle}{|\xi^H|}+\frac{\la \eta,\xi^H\ra^2}{|\xi^H|^3}\\
&=\frac{1}{|\xi^H|^3}\left( -\frac{2(2n-1)H_M}{2n+1} |\xi^H|^2\left\langle \nu^H,\xi^H\right\rangle -|\xi^H|^2 + l |\xi^H|^2\left\langle \nu^H,\xi^H\right\rangle + \left\langle \eta,\xi^H\right\rangle^2 \right)\\
&=\frac{\left\langle \nu^H,\xi^H\right\rangle}{|\xi^H|^3}\left( -\frac{2(2n-1)H_M}{2n+1} |\xi^H|^2 -\left\langle \nu^H,\xi^H\right\rangle + l |\xi^H|^2\right)+\\
&-\frac{ \sum_{j=1}^{n-1} \left\langle V_j,\xi^H\right\rangle^2+\left\langle W_j,\xi^H\right\rangle^2}{|\xi^H|^3}.
\end{align*}
We can now use the properties \eqref{3props}-\eqref{2props} established in Lemma \ref{lem2n} together with \eqref{l3k}, and we deduce
\begin{align*}
\eta\left(\varphi_v\right)(\xi)=\frac{\left\langle \nu^H,\xi^H\right\rangle}{|\xi^H|^3}\left( -2k(\xi) |\xi^H|^2 -\left\langle \nu^H,\xi^H\right\rangle + 3k(\xi) |\xi^H|^2\right)=0.
\end{align*}
Hence $\varphi_v$ is constant along $\gamma$. From \eqref{limphiv} we know that such constant has to be equal to $\frac{(2n-1)c}{2n+1}t_1+1$. The arbitrariness of $\xi_0\in \left(M\cap \{t<t_2\}\right)\smallsetminus\{(0,0,t_1)\}$ (which is the starting point of $\gamma$) yields
$$
\varphi_v \equiv \frac{(2n-1)c}{2n+1}t_1+1 \qquad\mbox{ in }\left(M\cap \{t<t_2\}\right)\smallsetminus\{(0,0,t_1)\}.
$$ 
The previous identity and the identity $\varphi_h\equiv 0$ can be rewritten, keeping in mind the definitions in \eqref{defphisn}, as
$$
\left\langle \nu^H,\frac{\xi^H}{|\xi^H|}\right\rangle=\frac{(2n-1)c}{2n+1}|\xi^H|^2\quad\mbox{ and }\quad \left\langle \eta,\frac{\xi^H}{|\xi^H|}\right\rangle=\frac{(2n-1)c}{2n+1}\left(t-t_1-\frac{2n+1}{(2n-1)c}\right).
$$
This implies
\begin{align*}
1&=\left\langle \nu^H,\frac{\xi^H}{|\xi^H|}\right\rangle^2+\left\langle \eta,\frac{\xi^H}{|\xi^H|}\right\rangle^2\\
&= \left(\frac{(2n-1)c}{2n+1}|\xi^H|^2\right)^2+\left(\frac{(2n-1)c}{2n+1}\left(t-t_1-\frac{2n+1}{(2n-1)c}\right)\right)^2
\end{align*}
for any $\xi \in (M\cap \{t<t_2\})\smallsetminus\{(0,0,t_1)\}$, which shows that
$$
(M\cap \{t<t_2\})\smallsetminus\{(0,0,t_1)\} \subset \de B_R(0,0,t_0)
$$
with
$$
R^2=\frac{2n+1}{(2n-1)c}\qquad\mbox{ and }\qquad t_0=t_1+\frac{2n+1}{(2n-1)c}.
$$
This allows, as in Theorem \ref{main}, to conclude the proof of the desired statement. 
\end{proof}

\subsection{The axially symmetric case}

As a concrete application of our main theorems we want to single out a relevant class of hypersurfaces in which we have a uniqueness result for the gauge spheres. We already mentioned in the Introduction (see also \cite{HL, vitt, Tpd, FM, MTj, HZ} for related settings) that it is quite typical to require some apriori symmetry in terms of rotational invariances. More precisely, we can recall the following well-known class of symmetric domains (which is consistent with the type of prescription of the horizontal curvature $H_M$ we are dealing with).

\begin{definition}\label{defcyli}
For $n\geq 1$ we say that a smooth hypersurface $M\subset\Hn$ is cylindrically symmetric if, locally around any point of $M$, there exists a defining function $f$ for $M$ which can be written as
\begin{equation}\label{defcyl}
f(x,y,t)=v(|x|^2+|y|^2,t)
\end{equation}
for some smooth function $v$.
\end{definition}

We have the following

\begin{corollary}\label{corcyl}
Fix $n\geq 1$. Let $M$ be a smooth hypersurface of $\Hn$ which is connected, orientable, compact, and without boundary. Suppose that $M$ is cylindrically symmetric with respect to Definition \ref{defcyli} and that, at every non-characteristic point $(x,y,t)\in M$, the horizontal mean curvature of $M$ is proportional to $\sqrt{|x|^2+|y|^2}$ up to a constant factor $c\neq 0$. Then $c>0$ and there exists $t_0\in\R$ such that $M=\de B_R(\xi_0)$ with $R=\sqrt{\frac{1}{c}\frac{2n+1}{2n-1}}$ and $\xi_0=(0,0,t_0)$.
\end{corollary}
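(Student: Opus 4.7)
My plan is to derive Corollary \ref{corcyl} from Theorem \ref{main} (case $n=1$) and Theorem \ref{mainumb} (case $n\geq 2$) by verifying the two remaining structural hypotheses needed to invoke those theorems.

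The first task is to show that $S_M\subseteq M\cap L_v$. If $M$ is locally defined by $f(x,y,t)=v(|x|^2+|y|^2,t)$, a direct computation with the vector fields $X_j, Y_j, T$ gives
$$
X_jf=2x_jv_1-2y_jv_2,\qquad Y_jf=2y_jv_1+2x_jv_2,\qquad Tf=v_2,
$$
where $v_1, v_2$ denote the partial derivatives of $v$ in its two arguments. Hence $|\mathcal{P}_H(\na f)|^2=4(|x|^2+|y|^2)(v_1^2+v_2^2)$, and since $\na f$ cannot vanish on a regular level set one cannot have $v_1=v_2=0$ on $M$. The horizontal part of $\na f$ can therefore vanish only where $|x|^2+|y|^2=0$, i.e.\ every characteristic point of $M$ lies on the $t$-axis. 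For $n=1$ this is precisely the missing hypothesis in Theorem \ref{main} and the conclusion follows at once.

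For $n\geq 2$ the additional task is to verify that a cylindrically symmetric hypersurface is umbilic in the sense of Definition \ref{def: umbilical}. This is the content of \cite[Proposition 3.1]{CCHY18}, but it can also be argued directly by symmetry: the unitary group $U(n)$ acts on $\Hn$ via $(x,y,t)\mapsto (Ax-By,\,Bx+Ay,\,t)$ with $A+iB\in U(n)$, and this action preserves the group law \eqref{glaw}, the horizontal distribution, the Riemannian metric and the operator $J$. Hence $M$, $\nu^H$, $\eta$ and $A_M$ are all $U(n)$-equivariant. At any non-characteristic $\xi\in M$ the stabilizer of $\xi$ inside $U(n)$ acts transitively and $J$-linearly on the unit sphere of $\mathrm{span}\{V_1,W_1,\ldots,V_{n-1},W_{n-1}\}$, which by a Schur-type argument forces the restriction of the symmetric operator $A_M$ to that complement to be a scalar multiple $k$ of the identity; setting $l=\la A_M\eta,\eta\ra$ gives the umbilicality relation of Definition \ref{def: umbilical}, and Theorem \ref{mainumb} then yields the stated conclusion.

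The point I expect to require the most care is the symmetry argument used in the second task: one has to set up the $U(n)$-action properly, verify its compatibility with the basis $\{X_j,Y_j,T\}$ and with the splitting used to define $\{\eta,V_i,W_i\}$ (which itself is determined only up to rotations of the complement), and track how $A_M$ transforms equivariantly. After that verification, or after a direct appeal to \cite[Proposition 3.1]{CCHY18}, the corollary becomes a direct consequence of Theorems \ref{main} and \ref{mainumb}.
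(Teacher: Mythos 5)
Your proposal is correct and its overall architecture coincides with the paper's: first show that cylindrical symmetry forces $S_M\subseteq M\cap L_v$ (your computation of $X_jf$, $Y_jf$, $Tf$ and of $|\mathcal{P}_H(\na f)|^2=4(|x|^2+|y|^2)(v_1^2+v_2^2)$ is exactly what the paper does, including the observation that $v_1^2+v_2^2>0$ on a regular level set), which settles $n=1$ via Theorem \ref{main}; then, for $n\geq 2$, verify umbilicality and invoke Theorem \ref{mainumb}. Where you genuinely diverge is in the umbilicality step. The paper computes $\la\na_{X_k}\nu^H,X_j\ra$ and its three companions explicitly from the defining function $v$, reads off $\mathcal{P}_H(\na_Z\nu^H)-\frac{2\la\nu,T\ra}{|\mathcal{P}_H(\nu)|}J(Z)=kZ$ on $\{\nu^H,\eta\}^\perp$ and $\mathcal{P}_H(\na_\eta\nu^H)=l\eta$, and exhibits closed formulas for $k$ and $l$ in terms of $v_1,v_2,v_{11},v_{12},v_{22}$. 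You instead argue by $U(n)$-equivariance and a Schur-type argument (or by citing \cite[Proposition 3.1]{CCHY18}, which the paper itself points to as an alternative). Your symmetry argument is sound, but to make it airtight you should record two facts you leave implicit: (i) $\mathcal{P}_H(\na f)$ is a complex-scalar multiple of $\xi^H$ (immediate from $X_jf+iY_jf=2(v_1+iv_2)(x_j+iy_j)$), so that ${\rm span}\{V_1,W_1,\ldots,V_{n-1},W_{n-1}\}$ is precisely the $J$-invariant complement $(\C\,\xi^H)^\perp$ on which the stabilizer $U(n-1)$ acts by its standard, $\R$-irreducible representation; and (ii) equivariance forces $A_M$ to preserve the splitting $\R\eta\oplus(\C\,\xi^H)^\perp$ (the $\eta$-component of $A_M V$ is an equivariant functional on a nontrivial irreducible representation, hence zero), after which the commutant of the standard representation is $\{aI+bJ\}$ and symmetry of $A_M$ kills the $J$-part, leaving $kI$. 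The trade-off is clear: your route is shorter and more conceptual but relies on representation theory and on checking the equivariance of every ingredient in Definition \ref{defshape}; the paper's route is longer but self-contained and additionally produces the explicit $k$ and $l$, which makes the link to Example \ref{exsphere} (where $l=3k$) transparent. Either way the corollary follows from Theorems \ref{main} and \ref{mainumb}.
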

\begin{proof}
Fix an open neighborhood $U\subset \Hn$ where $U\cap M$ is described, as in \eqref{defcyl}, by the zero-level set of a smooth function $f$ with non-null gradient. Pick the sign of $f$ such that the outward normal $\nu$ at $\xi\in U\cap M$ is equal to
$$
\nu=\frac{Tf T + \sum_{j=1}^{n} X_jf X_j + Y_j f Y_j}{\left( (Tf)^2+\sum_{j=1}^{n} (X_jf)^2 + (Y_j f)^2 \right)^{\frac{1}{2}}}.
$$
By exploiting \eqref{defcyl} one has $Tf=v_2$, $X_j f= 2x_j v_1 -2y_j v_2$ and $Y_j f= 2y_j v_1 + 2x_i v_2$, which implies that
$$
|\mathcal{P}_H\nu|^2=\frac{4(|x|^2+|y|^2)(v^2_1+v^2_2)}{4(|x|^2+|y|^2)(v^2_1+v^2_2) + v_2^2}.
$$
This is saying that for cylindrically symmetric hypersurfaces we can have characteristic points only when $|x|^2+|y|^2=0$, i.e.
$$
S_M\subseteq M\cap L_v.
$$
Therefore, in case $n=1$ we can apply Theorem \ref{main} to infer the desired statement.\\
Fix then $n\geq 2$. We want to check that the cylindrically symmetric assumption implies that $M$ is in fact horizontally umbilical. Since we have
$$
\nu^H=\sum_{j=1}^n \frac{x_jv_1-y_jv_2}{\sqrt{|x|^2+|y|^2}\sqrt{v_1^2+v_2^2}} X_j + \frac{y_jv_1+x_jv_2}{\sqrt{|x|^2+|y|^2}\sqrt{v_1^2+v_2^2}} Y_j
$$
and
$$
\eta=\sum_{j=1}^n \frac{y_jv_1+x_jv_2}{\sqrt{|x|^2+|y|^2}\sqrt{v_1^2+v_2^2}} X_j + \frac{y_jv_2-x_jv_1}{\sqrt{|x|^2+|y|^2}\sqrt{v_1^2+v_2^2}} Y_j,
$$
for any $j,k\in\{1,\ldots,n\}$ we can directly compute
\begin{align*}
\la\na_{X_k}\nu^H,X_j\ra=X_k(\left\langle \nu^H,X_j\right\rangle)
=\frac{\delta_{jk}v_1 + 2x_j(x_k v_{11}-y_kv_{12})-2y_j(x_k v_{12} -y_k v_{22})}{\sqrt{|x|^2+|y|^2}\sqrt{v_1^2+v_2^2}}+&\\
-\left\langle \nu^H,X_j\right\rangle \left( \frac{x_k}{|x|^2+|y|^2}+\frac{2v_1(x_k v_{11}-y_k v_{12})+2v_2(x_k v_{12}-y_k v_{22})}{v_1^2+v_2^2} \right),&\\
\la\na_{X_k}\nu^H,Y_j\ra=X_k(\left\langle \nu^H,Y_j\right\rangle)=\frac{\delta_{jk}v_2 + 2x_j(x_k v_{12}-y_kv_{22})+2y_j(x_k v_{11} -y_k v_{12})}{\sqrt{|x|^2+|y|^2}\sqrt{v_1^2+v_2^2}}+&\\
-\left\langle \nu^H,Y_j\right\rangle \left( \frac{x_k}{|x|^2+|y|^2}+\frac{2v_1(x_k v_{11}-y_k v_{12})+2v_2(x_k v_{12}-y_k v_{22})}{v_1^2+v_2^2} \right),&\\
\la\na_{Y_k}\nu^H,X_j\ra=Y_k(\left\langle \nu^H,X_j\right\rangle)=\frac{-\delta_{jk}v_2 + 2x_j(y_k v_{11}+x_kv_{12})-2y_j(y_k v_{12} +x_k v_{22})}{\sqrt{|x|^2+|y|^2}\sqrt{v_1^2+v_2^2}}+&\\
-\left\langle \nu^H,X_j\right\rangle \left( \frac{y_k}{|x|^2+|y|^2}+\frac{2v_1(y_k v_{11}+x_k v_{12})+2v_2(y_k v_{12}+x_k v_{22})}{v_1^2+v_2^2} \right),&\\
\la\na_{Y_k}\nu^H,Y_j\ra=Y_k(\left\langle \nu^H,Y_j\right\rangle)=\frac{\delta_{jk}v_1 + 2x_j(y_k v_{12}+x_kv_{22})+2y_j(y_k v_{11} +x_k v_{12})}{\sqrt{|x|^2+|y|^2}\sqrt{v_1^2+v_2^2}}+&\\
-\left\langle \nu^H,Y_j\right\rangle \left( \frac{y_k}{|x|^2+|y|^2}+\frac{2v_1(y_k v_{11}+x_k v_{12})+2v_2(y_k v_{12}+x_k v_{22})}{v_1^2+v_2^2} \right).&
\end{align*}
From the previous relations we can recognize by a straightforward computation that
$$
\mathcal{P}_H(\na_Z\nu^H) - \frac{2\left\langle \nu, T\right\rangle}{|\mathcal{P}_H(\nu)|} J(Z)=k Z \quad\mbox{ for any }Z\in \mathcal{H}\mbox{ such that }\left\langle Z,\nu^H\right\rangle=\left\langle Z,\eta\right\rangle=0
$$
and
$$
\mathcal{P}_H(\na_\eta\nu^H)=l\eta
$$
where
$$
k=\frac{v_1}{\sqrt{|x|^2+|y|^2}\sqrt{v_1^2+v_2^2}},\qquad \mbox{and} 
$$
$$
l=\frac{v_1}{\sqrt{|x|^2+|y|^2}\sqrt{v_1^2+v_2^2}} + \frac{2\sqrt{|x|^2+|y|^2}}{(v_1^2+v_2^2)^{\frac{3}{2}}}\left(v_{11} v_2^2+v_{22}v_1^2 - 2v_{12} v_1v_2\right).
$$
A direct comparison with Definition \ref{defshape} and Definition \ref{def: umbilical} tells us that $M$ is umbilic. We can then apply Theorem \ref{mainumb} and complete the proof of the corollary.
\end{proof}

\end{document}